\pgfplotsset{compat=1.15}
\tikzstyle{vertex}=[circle, draw, inner sep=0pt, minimum size=3pt]
\newcommand{\vertex}{\node[vertex]}
\begin{document}
\frontmatter          % for the preliminaries
\pagestyle{headings}  % switches on printing of running heads

\mainmatter              % start of the contributions

\title{Representation Number of\\ Word-Representable Split Graphs}

\titlerunning{Title}  % abbreviated title (for running head)
%                                     also used for the TOC unless
%                                     \toctitle is used
%

\author{Tithi Dwary \and Khyodeno Mozhui  \and %\inst{1} 
K. V. Krishna} %\inst{2}

\authorrunning{Tithi Dwary \and Khyodeno Mozhui \and K. V. Krishna } % abbreviated author list (for running head)

\institute{Indian Institute of Technology Guwahati, India\\
	\email{tithi.dwary@iitg.ac.in};\;\;\; 
	\email{k.mozhui@iitg.ac.in};\;\;\;
	\email{kvk@iitg.ac.in}}

\maketitle              % typeset the title of the contribution

\begin{abstract}
	A split graph is a graph whose vertex set can be partitioned into a clique and an independent set. The word-representability of split graphs was studied in a series of papers in the literature, and the class of word-representable split graphs was characterized through semi-transitive orientation. Nonetheless, the representation number of this class of graphs is still not known. In general, determining the representation number of a word-representable graph is an NP-complete problem. 
	In this work, through an algorithmic procedure, we show that the representation number of the class of word-representable split graphs is at most three. Further, we characterize the class of word-representable split graphs as well as the class of split comparability graphs which have representation number exactly three.	
	
\end{abstract}

\keywords{Word-representable graphs, comparability graphs, representation number, split graphs.}

\section{Introduction}
A word over a finite set of letters is a finite sequence which is written by juxtaposing the letters of the sequence. A subword $u$ of a word $w$, denoted by $u \ll w$, is defined as a subsequence of the sequence $w$. For instance, $aabccb \ll acabbccb$. Let $w$ be a word over a set $X$, and $A \subseteq X$. Then, $w|_A$ denotes the subword of $w$ that precisely consists of all occurrences of the letters of $A$. For example, if $w = acabbccb$, then $w|_{\{a, b\}} = aabbb$. For a word $w$, if $w|_{\{a, b\}}$ is of the form $abab \cdots$ or $baba \cdots$, which can be of even or odd length, we say the letters $a$ and $b$ alternate in $w$; otherwise we say $a$ and $b$ do not alternate in $w$. A $k$-uniform word is a word in which every letter occurs exactly $k$ times. For a word $w$, we write $w^R$ to denote its reversal.

A simple graph $G = (V, E)$ is called a word-representable graph, if there exists a word $w$ over $V$ such that for all $a, b \in V$, $\overline{ab} \in E$ if and only if $a$ and $b$ alternate in $w$. Although, the class of word-representable graphs was first introduced in the context of  Perkin semigroups \cite{perkinsemigroup}, this class of graphs received attention of many authors due to its combinatorial properties. It was proved that a graph is word-representable if and only if it admits a semi-transitive orientation \cite{Halldorsson_2016}. The class of word-representable graphs includes several important classes of graphs such as comparability graphs, circle graphs, $3$-colorable graphs and parity graphs. One may refer to the monograph \cite{words&graphs} for a complete introduction to the theory of word-representable graphs.

A word-representable graph $G$ is said to be $k$-word-representable if there is a $k$-uniform word representing it. In \cite{MR2467435}, It was proved that every word-representable graph is $k$-word-representable, for some $k$. The representation number of a word-representable graph $G$, denoted by $\mathcal{R}(G)$, is defined as the smallest number $k$ such that $G$ is $k$-word-representable. A word-representable graph $G$ is said to be permutationally representable if there is a word of the form $p_1p_2 \cdots p_k$ representing $G$, where each $p_i$ is a permutation on the vertices of $G$; in this case $G$ is called a permutationally $k$-representable graph. The permutation-representation number (in short \textit{prn}) of $G$, denoted by $\mathcal{R}^p(G)$, is the smallest number $k$ such that $G$ is permutationally $k$-representable. It was shown in \cite{perkinsemigroup} that a graph is permutationally representable if and only if it is a comparability graph - a graph which admits a transitive orientation. Further, if $G$ is a comparability graph, then $\mathcal{R}^p(G)$ is precisely the dimension of an induced poset of $G$ (cf. \cite{khyodeno2}). It is clear that for a comparability graph $G$, $\mathcal{R}(G) \le \mathcal{R}^p(G)$. 

The class of graphs with representation number at most two is characterized as the class of circle graphs \cite{Hallsorsson_2011} and the class of graphs with \textit{prn} at most two is the class of permutation graphs \cite{Gallaipaper}. In general, the problems of determining the representation number of a word-representable graph, and the \textit{prn} of a comparability graph are computationally hard \cite{Hallsorsson_2011,yannakakis1982complexity}. However, it was established that Cartesian product of $K_m$ and $K_2$ \cite{MR4023050}, prisms \cite{Kitaev_2013}, the Petersen graph \cite{words&graphs}, and book graphs \cite{MR4023050,mozhui2023} have representation number at most three. Further, in \cite{rep_crown}, the representation number of a crown graph was determined.

A split graph is a graph in which the vertex set can be partitioned into a clique and an independent set.  In the context of word-representable graphs, word-representability of split graphs was studied in a series of papers (cf. \cite{Chen_2022,Iamthong_2022,Iamthong_2023,Kitaev_2021,Kitaev_2024}), and the class of split graphs which are word-representable, called as word-representable split graphs, was characterized in \cite{Kitaev_2021} using semi-transitive orientation. In Section \ref{split_intro}, along with a detailed information about split graphs, we reconcile relevant results known for the class of split graphs restricted to circle graphs and comparability graphs.  While the \textit{prn} of a split comparability graph was known to be at most three (cf. \cite{split_orders2004}), the representation number of the class of word-representable split graphs is still unknown. 

In this paper, we show that the representation number of a word-representable split graph is at most three.  Indeed, based on the characterization of word-representable split graphs given in \cite{Kitaev_2021,Kitaev_2024}, we devise an algorithmic procedure to construct a 3-uniform word which represents a given word-representable split graph.  Additionally, we characterize the class of word-representable split graphs as well as the class of split comparability graphs which have representation number exactly three.

\section{Split Graphs}\label{split_intro}

A graph $G = (V, E)$ is a split graph if the vertex set $V$ can be partitioned as $I \cup C$, where $I$ induces an independent set, and $C$ induces a clique in $G$. In what follows, we denote $G = (I \cup C, E)$, where $C$ is inclusion wise maximal, i.e., no vertices of $I$ is adjacent to all vertices of $C$.  Foldes and Hammer introduced the class of split graphs and characterized them as $(2K_2, C_4, C_5)$-free graphs in \cite{Foldes_1977}. Further, in \cite{Split_circle_graphs}, the class of split graphs restricted to permutation graphs and circle graphs are characterized in terms of forbidden induced subgraphs as per the following results.

\begin{theorem}[\cite{Split_circle_graphs}]\label{split_char}
	Let $G$ be a split graph. Then, we have the following characterizations: 
	\begin{enumerate}[label=\rm (\roman*)]
		\item \label{Split_permu_graph} $G$ is a permutation graph if and only if $G$ is a $\mathcal{B}$-free graph, where $\mathcal{B}$ is the class of graphs given in Fig. \ref{fig7}. 
		\item \label{forb_sub_split_cir} $G$ is a circle graph if and only if $G$ is a $\mathcal{C}$-free graph, where $\mathcal{C}$ is the class of graphs given in Fig. \ref{fig9} (the graph depictions are corrected using the PhD thesis of N. Pardal \cite{pardal2020}). Here, for convenience, the edges of the clique over  vertices $c, c_1, c_2, \ldots$ are not drawn completely.
	\end{enumerate}
\end{theorem}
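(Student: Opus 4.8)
The plan is to prove each of the two characterizations by the standard two-step template: establish necessity via the hereditary nature of the target class, and establish sufficiency by an explicit construction of a representing model. For part \ref{Split_permu_graph}, the necessity direction is the routine half. Permutation graphs are closed under taking induced subgraphs, so it suffices to verify that none of the (finitely many) graphs in the family $\mathcal{B}$ is itself a permutation graph. Since a graph is a permutation graph precisely when both it and its complement are comparability graphs, I would check for each member of $\mathcal{B}$ that either it or its complement fails to admit a transitive orientation. This certifies that any split graph containing an induced member of $\mathcal{B}$ cannot be a permutation graph, which is the contrapositive of the direction we want.

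The substance lies in the sufficiency direction. Assume $G = (I \cup C, E)$ is a split graph containing no induced subgraph from $\mathcal{B}$. Because $C$ is a clique and $I$ is independent, the entire adjacency information of $G$ is encoded in the clique-neighborhoods $N(v) \cap C$ for $v \in I$. I would argue that $\mathcal{B}$-freeness forces these neighborhood sets into a sufficiently well-behaved pattern --- in particular ruling out the crossing and mutually-independent configurations that obstruct a two-line realization --- and then exhibit two linear orders on $V$ whose inversions reproduce exactly the edges of $G$. Equivalently, I would produce compatible transitive orientations of $G$ and of its complement and read off a permutation diagram from them. The main obstacle is controlling the interaction between the independent-set vertices and the clique: one must show that the absence of each forbidden gadget translates into a consistent linear ordering of the $I$-vertices that respects the inclusion and overlap structure of their clique-neighborhoods, and that no small obstruction is needed beyond those already in $\mathcal{B}$.

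Part \ref{forb_sub_split_cir} follows the same template with circle graphs in place of permutation graphs. For necessity, circle graphs again form a hereditary class, so I would verify that every graph in $\mathcal{C}$ is a minimal non-circle graph, i.e., admits no chord diagram. For sufficiency, assuming $G$ is a $\mathcal{C}$-free split graph, I would construct a chord representation directly: place the clique vertices as mutually crossing chords, and then insert, for each $v \in I$, a chord whose endpoints are positioned to cross exactly the chords corresponding to $N(v) \cap C$. Showing that $\mathcal{C}$-freeness guarantees such an insertion is simultaneously feasible for all independent-set vertices --- that is, that their neighborhoods can be arranged around the circle without conflict --- is the crux, and I expect it to be the hardest step. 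The difficulty is greater here than in part \ref{Split_permu_graph} because circle representations permit the richer nesting behaviour that the additional forbidden graphs in $\mathcal{C}$ (beyond those in $\mathcal{B}$) are precisely designed to control.
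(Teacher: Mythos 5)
This statement is not proved in the paper at all: it is Theorem~\ref{split_char}, imported verbatim from the cited work of Bonomo-Braberman, Dur\'an, Pardal and Safe \cite{Split_circle_graphs} (with depictions corrected via \cite{pardal2020}), so there is no in-paper proof to compare against, and the expected treatment here is a citation, not an argument. Judged on its own terms, your proposal has a genuine gap: it is a proof \emph{plan}, not a proof. The necessity halves you describe are indeed the routine parts (permutation and circle graphs are hereditary, so one only needs that no member of $\mathcal{B}$, respectively $\mathcal{C}$, lies in the class), though even there note that $\mathcal{C}$ is not a finite list --- it contains the infinite families odd $k$-suns with center, even $k$-suns, $M_{\rm II}(k)$, $M_{\rm III}(k)$, $F_1(k)$ and $F_2(k)$ of Fig.~\ref{fig9} --- so ``verify each member admits no chord diagram'' is itself a family of arguments parameterized by $k$, not a finite check, and minimality of the obstructions (which you assert for part \ref{forb_sub_split_cir}) is an additional claim the theorem as stated does not even require.

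The decisive gap is in both sufficiency directions, which are the entire content of the theorem and which you leave as declarations of intent: ``I would argue that $\mathcal{B}$-freeness forces these neighborhood sets into a sufficiently well-behaved pattern'' and ``Showing that $\mathcal{C}$-freeness guarantees such an insertion \ldots is the crux.'' No mechanism is given for extracting a two-line realization from $\mathcal{B}$-freeness, nor for proving that every $\mathcal{C}$-free split graph admits a chord diagram; in particular, your proposed circle construction (clique vertices as pairwise crossing chords, independent vertices inserted afterwards) presumes a circular arrangement of the clique compatible with all the sets $N(v)\cap C$ simultaneously, and establishing that such an arrangement exists exactly when the obstructions of $\mathcal{C}$ are absent is precisely the hard structural analysis occupying the bulk of \cite{Split_circle_graphs}. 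Compare Theorem~\ref{Word_split_graph} in this paper: even the analogous statement for word-representable split graphs requires a delicate labelling argument, and the circle-graph case is strictly harder. As written, your argument establishes only the easy contrapositive direction and names, without closing, the implication that makes the theorem true.
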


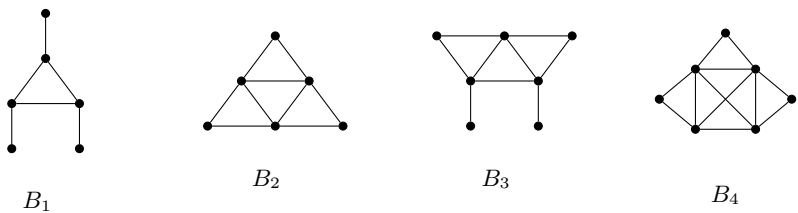
\begin{figure}[t]
	\centering
	\begin{minipage}{.25\textwidth}
		\centering
		\[\begin{tikzpicture}[scale=0.6]

			\vertex (1) at (0,0) [fill=black,label=left:$ $] {};
			\vertex (2) at (1.5,0) [fill=black,label=left:$ $] {};
			\vertex (3) at (0,1) [fill=black,label=left:$ $] {};	
			\vertex (4) at (1.5,1) [fill=black,label=left:$ $] {};
			\vertex (5) at (0.75,2) [fill=black,label=left:$ $] {};	
			\vertex (6) at (0.75,3) [fill=black,label=left:$ $] {};

			\path 
			(1) edge (3)
			(2) edge (4)
			(3) edge (5)
			(4) edge (5)
			(5) edge (6)
			(3) edge (4);

		\end{tikzpicture}\] 
		$B_1$	
	\end{minipage}%
	\begin{minipage}{.25\textwidth}
		\centering
		
		\[\begin{tikzpicture}[scale=0.6]

			\vertex (1) at (0,0) [fill=black,label=left:$ $] {};
			\vertex (2) at (1.5,0) [fill=black,label=left:$ $] {};
			\vertex (3) at (3,0) [fill=black,label=left:$ $] {};	
			\vertex (4) at (0.75,1) [fill=black,label=left:$ $] {};
			\vertex (5) at (2.25,1) [fill=black,label=left:$ $] {};	
			\vertex (6) at (1.5,2) [fill=black,label=left:$ $] {};

			\path
			(1) edge (2)
			(2) edge (3)
			(1) edge (4)
			(2) edge (4)
			(2) edge (5)
			(3) edge (5)
			(4) edge (5)
			(4) edge (6)
			(5) edge (6); 
			
		\end{tikzpicture}\]
		$B_2$
	\end{minipage}%
	\begin{minipage}{.25\textwidth}
		\centering
		
		\[\begin{tikzpicture}[scale=0.6]
			
			\vertex (1) at (0,0) [fill=black,label=left:$ $] {};
			\vertex (2) at (1.5,0) [fill=black,label=left:$ $] {};
			\vertex (3) at (0,1) [fill=black,label=left:$ $] {};	
			\vertex (4) at (1.5,1) [fill=black,label=left:$ $] {};
			\vertex (5) at (0.75,2) [fill=black,label=left:$ $] {};	
			\vertex (6) at (-0.75,2) [fill=black,label=left:$ $] {};	
			\vertex (7) at (2.25,2) [fill=black,label=left:$ $] {};

			\path 
			(1) edge (3)
			(2) edge (4)
			(3) edge (5)
			(4) edge (5)
			(5) edge (6)
			(3) edge (4)
			(3) edge (6)
			(5) edge (7)
			(4) edge (7);

		\end{tikzpicture}\]
		$B_3$
	\end{minipage}%
	\begin{minipage}{.25\textwidth}
		\centering
		
		\[\begin{tikzpicture}[scale=0.4]
			
			\vertex (1) at (0,0) [fill=black,label=below:$ $] {};
			\vertex (2) at (2,0) [fill=black,label=below:$ $] {};
			\vertex (3) at (2,2) [fill=black,label=right:$ $] {};
			\vertex (4) at (0,2) [fill=black,label=left:$ $] {};
			\vertex (5) at (1,3.2) [fill=black,label=above:$ $] {};
			\vertex (6) at (3.2,1) [fill=black,label=right:$ $] {};
			\vertex (7) at (-1.2,1) [fill=black,label=left:$ $] {};
			
			\path 
			(1) edge (2)
			(1) edge (7)
			(1) edge (4)
			(1) edge (3)
			(2) edge (3)
			(2) edge (4)
			(2) edge (6)
			(3) edge (4)
			(3) edge (5)
			(3) edge (6)
			(4) edge (5)
			(4) edge (7);
			
		\end{tikzpicture}\]
		$B_4$
	\end{minipage}%
	\caption{The family of graphs $\mathcal{B}$}
	\label{fig7}	
\end{figure}

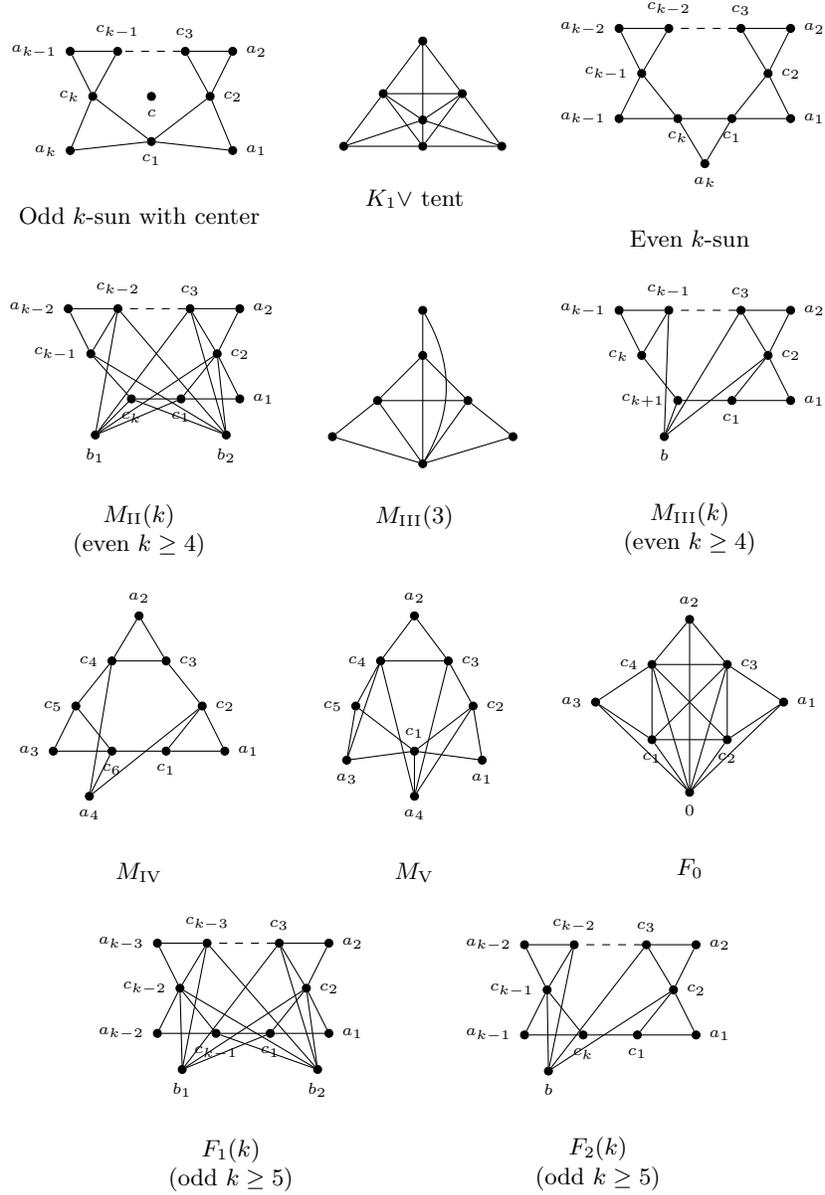
\begin{figure}[h!]
	\centering
	\begin{minipage}{.3\textwidth}
		\centering
		
		\[\begin{tikzpicture}[scale=0.6]

			\vertex (1) at (0,0) [fill=black,label=below:${\scriptstyle{c_1}}$] {};
			\vertex (2) at (1.3,1) [fill=black,label=right:${\scriptstyle{c_2}}$] {};
			\vertex (3) at (0.75,2) [fill=black,label=above:${\scriptstyle{c_3}}$] {};	
			\vertex (4) at (-0.75,2) [fill=black,label=above:${\scriptstyle{c_{k-1}}}$] {};
			\vertex (5) at (-1.3,1) [fill=black,label=left:${\scriptstyle{c_k}}$] {};	
			%\node (a) at (0,1.7) [label=above:${\cdots}$] {};
			\vertex (6) at (1.8,2) [fill=black,label=right:$\scriptstyle{a_2}$] {};
			\vertex (7) at (-1.8,2) [fill=black,label=left:$\scriptstyle{a_{k-1}}$] {};	
			\vertex (8) at (-1.8,-0.2) [fill=black,label=left:$ \scriptstyle{a_k}$] {};
			\vertex (9) at (1.8,-0.2) [fill=black,label=right:$\scriptstyle{a_1}$] {};
			\vertex (10) at (0,1) [fill=black,label=below:$\scriptstyle{c}$] {};	
			
			\path
			(1) edge (2)
			%			(1) edge (3)
			(2) edge (3)
			(1) edge (9)
			(2) edge (9)
			(2) edge (6)
			(3) edge (6)

			%			(1) edge (4)
			(1) edge (5)
			
			%			(2) edge (4)
			%			(2) edge (5)
			(3) edge[dashed] (4)
			%			(3) edge (5)
			(4) edge (5)
			
			(4) edge (7)
			(5) edge (7)
			(1) edge (8)
			(5) edge (8);
			
			%			(10) edge (1)
			%			(10) edge (2)
			%			(10) edge (3)
			%			(10) edge (4)
			%			(10) edge (5); 
			
		\end{tikzpicture}\]
		Odd $k$-sun	with center
	\end{minipage}%
	\begin{minipage}{.3\textwidth}
		\centering
		\[\begin{tikzpicture}[scale=0.7]

			\vertex (1) at (0,0) [fill=black,label=left:$ $] {};
			\vertex (2) at (1.5,0) [fill=black,label=left:$ $] {};
			\vertex (3) at (3,0) [fill=black,label=left:$ $] {};	
			\vertex (4) at (0.75,1) [fill=black,label=left:$ $] {};
			\vertex (5) at (2.25,1) [fill=black,label=left:$ $] {};	
			\vertex (6) at (1.5,2) [fill=black,label=left:$ $] {};	
			\vertex (7) at (1.5,0.5) [fill=black,label=left:$ $] {};
			
			\path
			(1) edge (2)
			(2) edge (3)
			(1) edge (4)
			(2) edge (4)
			(2) edge (5)
			(3) edge (5)
			(4) edge (5)
			(4) edge (6)
			(5) edge (6)
			(7) edge (1)
			(7) edge (2)
			(7) edge (3)
			(7) edge (4)
			(7) edge (5)
			(7) edge (6);	
		\end{tikzpicture}\] 
		$K_1 \vee$ tent  
	\end{minipage}%	
	\begin{minipage}{.3\textwidth}
		\centering
		
		\[\begin{tikzpicture}[scale=0.6]
			
			\vertex (1) at (0,0) [fill=black,label=below:${\scriptstyle{c_k}}$] {};
			\vertex (2) at (1.2,0) [fill=black,label=below:${\scriptstyle{c_1}}$] {};
			\vertex (3) at (2,1) [fill=black,label=right:${\scriptstyle{c_2}}$] {};	
			\vertex (4) at (1.4,2) [fill=black,label=above:${\scriptstyle{c_3}}$] {};
			\vertex (5) at (-0.2,2) [fill=black,label=above:${\scriptstyle{c_{k-2}}}$] {};	
			\vertex (6) at (-0.8,1) [fill=black,label=left:${\scriptstyle{c_{k-1}}}$] {};
			%\node (a) at (0.5,1.6) [label=above:${\cdots}$] {};	
			\vertex (7) at (2.5,2) [fill=black,label=right:${\scriptstyle{a_2}}$] {};
			\vertex (8) at (2.5,0) [fill=black,label=right:${\scriptstyle{a_1}}$] {};
			\vertex (9) at (-1.3,0) [fill=black,label=left:${\scriptstyle{a_{k-1}}}$] {};
			\vertex (10) at (-1.3,2) [fill=black,label=left:${\scriptstyle{a_{k-2}}}$] {};
			\vertex (11) at (0.6,-1) [fill=black,label=below:${\scriptstyle{a_{k}}}$] {};
			\path 
			(1) edge (2)
%			(1) edge (3)
%			(1) edge (4)
%			(1) edge (5)
			(1) edge (6)
			(2) edge (3)
%			(2) edge (4)
%			(2) edge (5)
%			(2) edge (6)
			(3) edge (4)
%			(3) edge (5)
%			(3) edge (6)
			(4) edge[dashed] (5)
%			(4) edge (6)
			(5) edge (6)
			(3) edge (7)
			(4) edge (7)
			(2) edge (8)
			(3) edge (8)
			(1) edge (9)
			(6) edge (9)
			(10) edge (5)
			(10) edge (6)
			(11) edge (1)
			(11) edge (2);

		\end{tikzpicture}\]
		Even $k$-sun
	\end{minipage}%

	\begin{minipage}{.3\textwidth}
		\centering
		
		\[\begin{tikzpicture}[scale=0.6]
			\vertex (1) at (0.1,0) [fill=black,label=below:${\scriptstyle{c_k}}$] {};
			\vertex (2) at (1.2,0) [fill=black,label=below:${\scriptstyle{c_1}}$] {};
			\vertex (3) at (2,1) [fill=black,label=right:${\scriptstyle{c_2}}$] {};	
			\vertex (4) at (1.4,2) [fill=black,label=above:${\scriptstyle{c_3}}$] {};
			\vertex (5) at (-0.2,2) [fill=black,label=above:${\scriptstyle{c_{k-2}}}$] {};	
			\vertex (6) at (-0.8,1) [fill=black,label=left:${\scriptstyle{c_{k-1}}}$] {};
			%\node (a) at (0.5,1.6) [label=above:${\cdots}$] {};	
			\vertex (7) at (2.5,2) [fill=black,label=right:$\scriptstyle{a_2}$] {};
			\vertex (8) at (2.5,0) [fill=black,label=right:$\scriptstyle{a_1}$] {};
			%\vertex (9) at (-1.3,0) [fill=black,label=left:${\scriptstyle{1'}}$] {};
			\vertex (10) at (-1.3,2) [fill=black,label=left:${\scriptstyle{a_{k-2}}}$] {};
			\vertex (11) at (-0.7,-0.8) [fill=black,label=below:${\scriptstyle{b_1}}$] {};
			\vertex (12) at (2.2,-0.8) [fill=black,label=below:${\scriptstyle{b_2}}$] {};
			\path 
			(1) edge (2)
			%			(1) edge (3)
			%			(1) edge (4)
			%			(1) edge (5)
			(1) edge (6)
			(2) edge (3)
			%			(2) edge (4)
			%			(2) edge (5)
			%			(2) edge (6)
			(3) edge (4)
			%			(3) edge (5)
			%			(3) edge (6)
			(4) edge[dashed] (5)
			%			(4) edge (6)
			(5) edge (6)
			(3) edge (7)
			(4) edge (7)
			(2) edge (8)
			(3) edge (8)
			%			(1) edge (9)
			%			(6) edge (9)
			(10) edge (5)
			(10) edge (6)
			(11) edge (1)
			(11) edge (2)
			(11) edge (3)
			(11) edge (4)
			(11) edge (5)
			(12) edge (1)
			(12) edge (3)
			(12) edge (4)
			(12) edge (5)
			(12) edge (6);
			
		\end{tikzpicture}\]
		$M_{\rm II}(k)$ \\
		(even $k \ge 4$)
	\end{minipage}%	
\begin{minipage}{.3\textwidth}
	\centering
	\[\begin{tikzpicture}[scale=0.6]
		
		\vertex (1) at (0,-0.4) [fill=black,label=left:$ $] {};
		\vertex (2) at (-1,1) [fill=black,label=left:$ $] {};
		\vertex (3) at (1,1) [fill=black,label=left:$ $] {};	
		\vertex (4) at (0,2) [fill=black,label=left:$ $] {};
		\vertex (5) at (0,3) [fill=black,label=left:$ $] {};	
		\vertex (6) at (-2,0.2) [fill=black,label=left:$ $] {};	
		\vertex (7) at (2,0.2) [fill=black,label=left:$ $] {};
		
		\path
		(1) edge (2)
		(1) edge (3)
		(1) edge (4)
		(5) edge[bend left] (1)
		(6) edge (1)
		(7) edge (1)
		(2) edge (3)	
		(2) edge (4)
		(3) edge (4)
		(4) edge (5)
		(6) edge (2)
		(7) edge (3);
		
	\end{tikzpicture}\] 
	$M_{\rm III}(3)$
\end{minipage}%	
	\begin{minipage}{.3\textwidth}
		\centering
		
		\[\begin{tikzpicture}[scale=0.6]
			
			\vertex (1) at (0,0) [fill=black,label= left:${\scriptstyle{c_{k+1}}}$] {};
			\vertex (2) at (1.2,0) [fill=black,label=below:${\scriptstyle{c_1}}$] {};
			\vertex (3) at (2,1) [fill=black,label=right:${\scriptstyle{c_2}}$] {};	
			\vertex (4) at (1.4,2) [fill=black,label=above:${\scriptstyle{c_3}}$] {};
			\vertex (5) at (-0.2,2) [fill=black,label=above:${\scriptstyle{c_{k-1}}}$] {};	
			\vertex (6) at (-0.8,1) [fill=black,label=left:${\scriptstyle{c_{k}}}$] {};
			%\node (a) at (0.5,1.6) [label=above:${\cdots}$] {};	
			\vertex (7) at (2.5,2) [fill=black,label=right:$\scriptstyle{a_2}$] {};
			\vertex (8) at (2.5,0) [fill=black,label=right:$\scriptstyle{a_1}$] {};
		%	\vertex (9) at (-1.3,0) [fill=black,label=left:${\scriptstyle{1}}$] {};
			\vertex (10) at (-1.3,2) [fill=black,label=left:${\scriptstyle{a_{k-1}}}$] {};
			\vertex (11) at (-0.3,-0.8) [fill=black,label=below:${\scriptstyle{b}}$] {};
			
			\path 
			(1) edge (2)
%			(1) edge (3)
%			(1) edge (4)
%			(1) edge (5)
			(1) edge (6)
			(2) edge (3)
%			(2) edge (4)
%			(2) edge (5)
%			(2) edge (6)
			(3) edge (4)
%			(3) edge (5)
%			(3) edge (6)
			(4) edge[dashed] (5)
%			(4) edge (6)
			(5) edge (6)
			(3) edge (7)
			(4) edge (7)
			(2) edge (8)
			(3) edge (8)
			(10) edge (5)
			(10) edge (6)
			(11) edge (1)
			(11) edge (3)
			(11) edge (4)
			(11) edge (5);

		\end{tikzpicture}\]
		$M_{\rm III}(k)$ \\
		(even $k \ge 4$)
	\end{minipage}%	
		
	\begin{minipage}{.3\textwidth}
		\centering
		\[\begin{tikzpicture}[scale=0.6]

			\vertex (1) at (0,0) [fill=black,label=below:$ \scriptstyle{c_6}$] {};
			\vertex (2) at (1.2,0) [fill=black,label=below:$\scriptstyle{c_1} $] {};
			\vertex (3) at (2,1) [fill=black,label=right:$\scriptstyle{c_2} $] {};	
			\vertex (4) at (1.2,2) [fill=black,label=right:$ \scriptstyle{c_3}$] {};
			\vertex (5) at (0,2) [fill=black,label=left:$\scriptstyle{c_4} $] {};	
			\vertex (6) at (-0.8,1) [fill=black,label=left:$ \scriptstyle{c_5}$] {};
			\vertex (8) at (2.5,0) [fill=black,label=right:${\scriptstyle{a_1}}$] {};
			\vertex (9) at (-1.3,0) [fill=black,label=left:${\scriptstyle{a_3}}$] {};
			\vertex (10) at (0.6,3) [fill=black,label=above:${\scriptstyle{a_2}}$] {};
			\vertex (11) at (-0.5,-1) [fill=black,label=below:${\scriptstyle{a_4}}$] {};
			
			\path
			(1) edge (2)
%			(1) edge (3)
%			(1) edge (4)
%			(1) edge (5)
			(1) edge (6)
			(2) edge (3)
%			(2) edge (4)
%			(2) edge (5)
%			(2) edge (6)
			(3) edge (4)
			(4) edge (5)
%			(4) edge (6)
			(11) edge (1)
			(11) edge (3)
			(11) edge (5)
			(10) edge (4)
			(1) edge (9)
			(2) edge (8)
			
%			(3) edge (5)
%			(3) edge (6)
			
			(5) edge (6)
			
			(3) edge (8)
			
			(6) edge (9)
			
			(10) edge (5)
			;		
		\end{tikzpicture}\] 
		$M_{\rm IV}$
	\end{minipage}%
	\begin{minipage}{.3\textwidth}
		\centering
		\[\begin{tikzpicture}[scale=0.6]

			\vertex (1) at (0,0) [fill=black,label=above:$\scriptstyle{c_1}$] {};
			\vertex (2) at (1.3,1) [fill=black,label=right:$\scriptstyle{c_2} $] {};
			\vertex (3) at (0.75,2) [fill=black,label=right:$\scriptstyle{c_3}$] {};	
			\vertex (4) at (-0.75,2) [fill=black,label=left:$\scriptstyle{c_4}$] {};
			\vertex (5) at (-1.3,1) [fill=black,label=left:$\scriptstyle{c_5}$] {};		
			\vertex (8) at (-1.5,-0.2) [fill=black,label=below:$ \scriptstyle{a_3}$] {};
			\vertex (9) at (1.5,-0.2) [fill=black,label=below:$ \scriptstyle{a_1}$] {};
			\vertex (10) at (0,-1) [fill=black,label=below:$\scriptstyle{a_4} $] {};	
			\vertex (11) at (0,3) [fill=black,label=above:$ \scriptstyle{a_2}$] {};
			
			\path
			(10) edge (1)
			(10) edge (2)
			(10) edge (3)
			(10) edge (4)
			(1) edge (2)
			(1) edge (5)
			(3) edge (4)
			(4) edge (5)
			(11) edge (4)
			(4) edge (8)
			(1) edge (8)
			(1) edge (9)
			(2) edge (3)
			(2) edge (9)
			(5) edge (8)
			(11) edge (3)
			;		
		\end{tikzpicture}\] 
		$M_{\rm V}$
	\end{minipage}%	
	\begin{minipage}{.3\textwidth}
		\centering
		\[\begin{tikzpicture}[scale=0.5]

			\vertex (1) at (0,0) [fill=black,label=below:${\scriptstyle{c_1}}$] {};
			\vertex (2) at (2,0) [fill=black,label=below:${\scriptstyle{c_2}}$] {};
			\vertex (3) at (2,2) [fill=black,label=right:${\scriptstyle{c_3}}$] {};
			\vertex (4) at (0,2) [fill=black,label=left:${\scriptstyle{c_4}}$] {};
			\vertex (5) at (1,3.2) [fill=black,label=above:${\scriptstyle{a_2}}$] {};
			\vertex (6) at (3.5,1) [fill=black,label=right:${\scriptstyle{a_1}}$] {};
			\vertex (7) at (-1.5,1) [fill=black,label=left:${\scriptstyle{a_3}}$] {};
			\vertex (8) at (1,-1.4) [fill=black,label=below:${\scriptstyle{0}}$] {};
			
			\path 
			(1) edge (2)
			(1) edge (7)
			(1) edge (4)
			(1) edge (3)
			(2) edge (3)
			(2) edge (4)
			(2) edge (6)
			(3) edge (4)
			(3) edge (5)
			(3) edge (6)
			(4) edge (5)
			(4) edge (7)
			(8) edge (1)
			(8) edge (2)
			(8) edge (3)
			(8) edge (4)
			(8) edge (5)
			(8) edge (6)
			(8) edge (7);	
		\end{tikzpicture}\] 
		$F_0$
	\end{minipage}%
	
	\begin{minipage}{.4\textwidth}
		\centering
		\[\begin{tikzpicture}[scale=0.6]
			
			\vertex (1) at (0,0) [fill=black,label=below:${\scriptstyle{c_{k-1}}}$] {};
			\vertex (2) at (1.2,0) [fill=black,label=below:${\scriptstyle{c_1}}$] {};
			\vertex (3) at (2,1) [fill=black,label=right:${\scriptstyle{c_2}}$] {};	
			\vertex (4) at (1.4,2) [fill=black,label=above:${\scriptstyle{c_3}}$] {};
			\vertex (5) at (-0.2,2) [fill=black,label=above:${\scriptstyle{c_{k-3}}}$] {};	
			\vertex (6) at (-0.8,1) [fill=black,label=left:${\scriptstyle{c_{k-2}}}$] {};
		%	\node (a) at (0.5,1.6) [label=above:${\cdots}$] {};	
			\vertex (7) at (2.5,2) [fill=black,label=right:${\scriptstyle{a_2}}$] {};
			\vertex (8) at (2.5,0) [fill=black,label=right:${\scriptstyle{a_1}}$] {};
			\vertex (9) at (-1.3,0) [fill=black,label=left:${\scriptstyle{a_{k-2}}}$] {};
			\vertex (10) at (-1.3,2) [fill=black,label=left:${\scriptstyle{a_{k-3}}}$] {};
			\vertex (11) at (-0.75,-0.8) [fill=black,label=below:${\scriptstyle{b_1}}$] {};
			\vertex (12) at (2.25,-0.8) [fill=black,label=below:${\scriptstyle{b_2}}$] {};
			\path 
			(1) edge (2)
%			(1) edge (3)
%			(1) edge (4)
%			(1) edge (5)
			(1) edge (6)
			(2) edge (3)
%			(2) edge (4)
%			(2) edge (5)
%			(2) edge (6)
			(3) edge (4)
%			(3) edge (5)
%			(3) edge (6)
			(4) edge[dashed] (5)
%			(4) edge (6)
			(5) edge (6)
			(3) edge (7)
			(4) edge (7)
			(2) edge (8)
			(3) edge (8)
			(1) edge (9)
			(6) edge (9)
			(10) edge (5)
			(10) edge (6)
			
			(11) edge (2)
			(11) edge (3)
			(11) edge (4)
			(11) edge (5)
			(11) edge (6)
			(12) edge (1)
			(12) edge (3)
			(12) edge (4)
			(12) edge (5)
			(12) edge (6);
		\end{tikzpicture}\] 
		$F_1(k)$ \\ (odd $k \ge 5$)
	\end{minipage}%
	\begin{minipage}{.4\textwidth}
		\centering
		\[\begin{tikzpicture}[scale=0.6]

			\vertex (1) at (0,0) [fill=black,label=below:${\scriptstyle{c_k}}$] {};
			\vertex (2) at (1.2,0) [fill=black,label=below:${\scriptstyle{c_1}}$] {};
			\vertex (3) at (2,1) [fill=black,label=right:${\scriptstyle{c_2}}$] {};	
			\vertex (4) at (1.4,2) [fill=black,label=above:${\scriptstyle{c_3}}$] {};
			\vertex (5) at (-0.2,2) [fill=black,label=above:${\scriptstyle{c_{k-2}}}$] {};	
			\vertex (6) at (-0.8,1) [fill=black,label=left:${\scriptstyle{c_{k-1}}}$] {};
			%\node (a) at (0.6,1.6) [label=above:${\cdots}$] {};	
			\vertex (7) at (2.5,2) [fill=black,label=right:${\scriptstyle{a_2}}$] {};
			\vertex (8) at (2.5,0) [fill=black,label=right:${\scriptstyle{a_1}}$] {};
			\vertex (9) at (-1.3,0) [fill=black,label=left:${\scriptstyle{a_{k-1}}}$] {};
			\vertex (10) at (-1.3,2) [fill=black,label=left:${\scriptstyle{a_{k-2}}}$] {};
			\vertex (11) at (-0.77,-0.8) [fill=black,label=below:${\scriptstyle{b}}$] {};
			
			\path 
			(1) edge (2)
%			(1) edge (3)
%			(1) edge (4)
%			(1) edge (5)
			(1) edge (6)
			(2) edge (3)
%			(2) edge (4)
%			(2) edge (5)
%			(2) edge (6)
			(3) edge (4)
%			(3) edge (5)
%			(3) edge (6)
			(4) edge[dashed] (5)
%			(4) edge (6)
			(5) edge (6)
			(3) edge (7)
			(4) edge (7)
			(2) edge (8)
			(3) edge (8)
			(1) edge (9)
			(6) edge (9)
			(10) edge (5)
			(10) edge (6)

			(11) edge (3)
			(11) edge (4)
			(11) edge (5)
			(11) edge (6);
		\end{tikzpicture}\] 
		$F_2(k)$ \\ (odd $k \ge 5$)
	\end{minipage}%

	\caption{The family $\mathcal{C}$ of split graphs}
	\label{fig9}	
\end{figure}

Split graphs which are transitively orientable are called split comparability graphs. Recall from \cite{split_orders2004} that a partial order is called a split order if the corresponding comparability graph is a split graph. It was proved in \cite{split_orders2004} that the dimension of a split order is at most three. Hence, in view of \cite[Corollary 2]{khyodeno2}, it is evident that the \textit{prn} of a split comparability graph is at most three. The class of split comparability graphs are characterized in terms of the forbidden induced subgraphs as per the following result.

\begin{theorem}[\cite{Golumbicbook_2004}] \label{Split_comp_graph}
	Let $G$ be a split graph. Then, $G$ is a comparability graph if and only if $G$ contains no induced subgraph isomorphic to $B_1$, $B_2$, or $B_3$ (depicted in Fig. \ref{fig7}).
\end{theorem}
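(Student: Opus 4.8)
The plan is to hang both directions on a single elementary reformulation of transitive orientability for split graphs. Fix the partition $G=(I\cup C,E)$ with $C$ a maximal clique, and note that in any transitive orientation the clique $C$ is a transitive tournament, hence totally ordered, say by $\prec$, while each $v\in I$ is comparable exactly to its neighbours $N(v)\subseteq C$. Chasing the transitivity constraints $x\to y\to z$ shows that such an orientation exists if and only if there is a linear order $\prec$ on $C$ with two properties: (a) for every $v\in I$ the set $N(v)$ splits into an initial $\prec$-segment $\{c: c\to v\}$ and a final $\prec$-segment $\{c: v\to c\}$, equivalently $C\setminus N(v)$ is a $\prec$-interval; and (b) for all $u,v\in I$ there is no common neighbour $c\in N(u)\cap N(v)$ with $u\to c$ and $c\to v$. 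Property (a) is forced because $c\to v$ together with a clique edge $c'\to c$ would otherwise demand the non-edge $c'v$, and (b) is forced because $u\to c\to v$ would demand the non-edge $uv$. Conversely, orienting as prescribed and declaring the $I$-vertices pairwise incomparable is routinely checked to be transitive (the only triples that could create a false edge or force an $I$--$I$ comparability are exactly those excluded by (a) and (b)), so (a) and (b) are also sufficient. I would record this equivalence as a lemma and use it throughout.

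For necessity I would simply verify that none of $B_1,B_2,B_3$ admits such an order; since comparability graphs are hereditary, no split comparability graph can contain them as induced subgraphs. In $B_1$, the net, each of the three pendants forces its unique clique-neighbour to an end of $\prec$, but a three-element order has only two ends, so the middle clique vertex violates (a). In $B_2$, the $3$-sun, and in $B_3$ every non-neighbourhood is a singleton or an endpoint pair, so (a) can be met; a short case analysis over the three orders of the triangle then shows that two independent vertices always straddle a common clique vertex, violating (b). The symmetry of the $3$-sun makes this transparent, while in $B_3$ the two degree-one vertices pin the two endpoints of $\prec$, leaving the single residual order in which the remaining pair straddles the middle vertex.

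The substance is sufficiency: assuming $G$ has no induced $B_1,B_2,B_3$, I must construct a linear order on $C$ meeting (a) and (b). I would induct on $|I|$, deleting an independent vertex, invoking the order produced for $G-v$, and reinserting $v$. The core lemma to establish is a \emph{localisation}: any failure to extend---either an inability to realise all sets $C\setminus N(v)$ simultaneously as intervals, or a forced betweenness $u\to c\to v$---can be witnessed on a bounded set of vertices inducing one of the three graphs. I expect an irreducible violation of (a) to pin three clique vertices to extreme positions via three independent vertices of small neighbourhood, reproducing a net $B_1$, and an irreducible violation of (b) to exhibit two independent vertices straddling a common neighbour which, together with the clique triangle they select, yields a $3$-sun $B_2$ or, when one side degenerates to a pendant, the graph $B_3$.

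The main obstacle is precisely this localisation step, that is, converting a \emph{global} non-existence of a good order into a \emph{bounded} induced subgraph. Condition (a) alone is essentially a consecutive-ones requirement on the incidence of $I$ against $C$, whose general obstructions form an infinite family, so the argument must exploit the split structure---that the non-neighbourhoods are small and that (b) couples them through the clique---to collapse every obstruction to one of $B_1,B_2,B_3$. I expect the cleanest route to be Golumbic's implication-class ($\Gamma$-relation) machinery: take a minimum-length forcing cycle witnessing that some edge is oriented both ways, and argue that inside a split graph such a minimal cycle, together with its forced triangular chords, spans exactly a net or a sun, i.e.\ an induced $B_1$, $B_2$, or $B_3$.
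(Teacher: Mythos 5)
Your opening reduction is correct and is, in substance, the order-theoretic characterization that the paper quotes as Theorem~\ref{coro_3}: in any transitive orientation the clique is linearly ordered, the in-neighbourhood $\{c \in C : c \to v\}$ of each $v \in I$ must be a down-set of $(C,\prec)$ and the out-neighbourhood an up-set, so each $N(v)$ is a prefix-plus-suffix of the clique order, and your condition (b) is exactly the no-straddle constraint $u \to c \to v$; the converse verification is routine as you say. Your necessity checks are also sound: in $B_1$ the three singleton neighbourhoods would pin three clique vertices to the two ends of a three-element order, and in $B_2$ and $B_3$ the decomposition of each $N(v)$ into a down-set and an up-set is essentially forced once $\prec$ is fixed (a nonempty up-set must contain the maximum, a nonempty down-set the minimum), after which a straddled clique vertex appears in every case. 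So the ``only if'' direction of the theorem is genuinely established by your argument.

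The gap is that the entire content of the theorem lies in the sufficiency direction, and your proof of it never materialises: the ``localisation'' lemma is announced as an expectation, not proved, and you concede it is the main obstacle. It is a real one, not a deferred routine step. Condition (a) alone is a consecutive-ones-type requirement whose minimal obstructions (Tucker's) form infinite families, so any bounded-witness claim must do serious work; and your induction on $|I|$ has the standard defect that the order produced for $G-v$ is only \emph{one} good order --- reinserting $v$ may be impossible for that order while some globally different order on $C$ works, so the failure to extend is not local to a bounded vertex set without further argument. The route that actually closes this, and the one behind the citation \cite{Golumbicbook_2004}, goes through Gallai's complete catalogue of minimal non-comparability graphs \cite{Gallaipaper}: being a comparability graph is hereditary, so a split graph that is not a comparability graph contains some minimal non-comparability graph as an induced subgraph; that subgraph is itself split, hence $(2K_2, C_4, C_5)$-free by Foldes--Hammer \cite{Foldes_1977}, and inspection of Gallai's list shows the only $(2K_2, C_4, C_5)$-free members are $B_1$, $B_2$ and $B_3$. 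Your closing suggestion (a minimum-length $\Gamma$-forcing cycle inside a split graph) is in the right spirit --- it is the machinery by which Gallai's classification is proved --- but carrying it out from scratch amounts to redoing the relevant part of that classification, which is precisely the step your proposal leaves open. For calibration, note the paper itself offers no proof of this statement and simply imports it from \cite{Golumbicbook_2004}; your necessity half therefore goes beyond what the paper writes, but as a standalone proof of the theorem the proposal is incomplete.
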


In \cite{Kitaev_2021}, it was shown that not all split graphs are word-representable and a characterization for the class of word-representable split graphs was found. While the recognition problem of word-representable graphs is NP-complete, in \cite{Kitaev_2024}, it was proved that recognizing the word-representable split graphs can be done in polynomial time.  The following characterization of word-representable split graphs is a key tool for our work. In what follows, for any two integers $a \le b$, we denote the set of integers $\{a, a+1, \ldots, b\}$ by $[a, b]$.

\begin{theorem}[\cite{Kitaev_2021,Kitaev_2024}]\label{Word_split_graph}
	Let $G = (I \cup C, E)$ be a split graph. Then, $G$ is word-representable if and only if the vertices of $C$ can be labeled from $1$ to $k = |C|$ in such a way that for each $a, b \in I$ the following holds.
	\begin{enumerate}[label=\rm (\roman*)]
		\item \label{point_1} Either $N(a) = [1, m] \cup [n, k]$, for $m < n$, or $N(a) = [l, r]$, for $l \le r$.
		\item \label{point_2} If $N(a) = [1, m] \cup [n, k]$ and $N(b) = [l, r]$, for $m < n$ and $l \le r$, then $l > m$ or $r < n$.
		\item \label{point_3} If $N(a) = [1, m] \cup [n, k]$ and $N(b) = [1, m'] \cup [n', k]$, for $m < n$ and $m' < n'$, then $m' < n$ and $m < n'$.
	\end{enumerate}
\end{theorem}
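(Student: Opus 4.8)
The plan is to prove both directions through the semi-transitive orientation characterization of word-representability \cite{Halldorsson_2016}, exploiting the fact that in any acyclic orientation of $G$ the clique $C$ becomes a transitive tournament and hence inherits a linear order; this order is the labeling $1, 2, \ldots, k$ we will work with. Throughout, for $a \in I$ I write $N^-(a)$ and $N^+(a)$ for the in- and out-neighbours of $a$ inside $C$.

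For the forward direction (word-representable $\Rightarrow$ labeling exists), fix a semi-transitive orientation and label $C$ by its clique order. First I would establish the single-vertex structure behind (i): acyclicity forces every vertex of $N^-(a)$ to precede every vertex of $N^+(a)$ (otherwise $c_1 \to a \to c_2$ with $c_2 < c_1$ closes a triangle into a directed cycle), while shortcut-freeness forces each of $N^-(a), N^+(a)$ to be a contiguous interval (a gap $i < j < l$ with $i, l$ on the same side and $j \notin N(a)$ yields a path $i \to j \to l \to a$ or $a \to i \to j \to l$ shortcutted by the clique edge $i \to l$). The crucial refinement is that when both $N^-(a)$ and $N^+(a)$ are nonempty, the two intervals must reach the ends $1$ and $k$: a non-neighbour $j < \min N^-(a)$ would give the shortcut $j \to c \to a \to c'$ closed by the clique edge $j \to c'$, and symmetrically at the top end. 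This pins $N(a)$ down to exactly the co-interval $[1,m] \cup [n,k]$, or to a single interval $[l,r]$ when one side is empty, which is (i). Conditions (ii) and (iii) then follow by contraposition: I would check that each forbidden configuration produces an explicit length-three directed path, of the form $b \to c \to a \to c'$ for (ii) or $b \to n' \to a \to n$ for (iii), whose endpoints are joined by a clique edge but whose interior contains the non-adjacent pair $\{a,b\} \subseteq I$, hence a shortcut contradicting semi-transitivity.

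For the backward direction (labeling $\Rightarrow$ word-representable) I would exhibit a concrete orientation and prove it semi-transitive. Orient $C$ by the labeling, orient each co-interval vertex as forced ($[1,m] \to a \to [n,k]$), and orient every interval vertex $b$ with $N(b) = [l,r]$ as a source. Acyclicity is immediate from a ranking function that places clique vertex $i$ at height $i$, a co-interval vertex at height $m + \tfrac12$, and an interval source at height $l - \tfrac12$, under which every edge strictly increases height. The same ranking forces every directed path to be height-increasing, which is the lever for ruling out shortcuts: condition (iii) is exactly equivalent to \emph{``no directed path contains two co-interval vertices''} (two of them in one path would force $m' \ge n$ or $m \ge n'$), and condition (ii) is exactly equivalent to \emph{``no directed path from an interval source $b$ to a neighbour of $b$ passes through a co-interval vertex''}. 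Granting these two facts, any directed path carries at most one $I$-vertex, all of whose clique companions lie on the correct side of its gap, so the induced sub-orientation on each path is a transitive tournament and no shortcut can occur.

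The step I expect to be the main obstacle is the sufficiency argument in the backward direction, and specifically the passage from the \emph{pairwise} conditions (ii), (iii) to the \emph{global} absence of shortcuts along paths of arbitrary length. The naive pairwise reading only controls length-three witnesses; the real content is that the ranking function collapses every longer path into one that still must contain either two co-interval vertices or a co-interval vertex on a source-path, both of which the conditions already forbid. Making this reduction precise --- in particular verifying that a path re-entering the neighbourhood $[l,r]$ of a source after leaving it is impossible under the height constraint --- is where the care is needed; the forward direction and the acyclicity claim are comparatively routine once the in-before-out and interval observations are in place.
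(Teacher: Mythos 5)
This theorem is imported by the paper from \cite{Kitaev_2021,Kitaev_2024} without proof, so there is no in-paper argument to compare against; judged on its own merits, your reconstruction via semi-transitive orientations is essentially the route of the cited sources and is correct in both directions. In the forward direction your three structural steps (in-neighbours before out-neighbours by acyclicity; each of $N^-(a)$, $N^+(a)$ an interval by shortcut-freeness; the two intervals forced to reach $1$ and $k$ when both are nonempty) are exactly right, as are the length-three witnesses for (ii) and (iii) --- note only that in the interval step the closing arc of the shortcut is $i \to a$ (resp.\ $a \to l$), not the clique edge $i \to l$ as you wrote, since a shortcut needs the arc from the first to the last vertex of the path with a missing internal arc. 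In the backward direction your construction (clique ordered by labels, co-interval vertices oriented $[1,m] \to a \to [n,k]$, interval vertices as sources) and the height function do yield acyclicity and label-monotonicity along directed paths, and your two equivalences correctly isolate the only dangerous configurations. One summary sentence overstates, though: it is \emph{not} true that every directed path carries at most one $I$-vertex --- e.g.\ with $N(b) = [1,2]$ and $N(a) = [1,1] \cup [4,k]$ the path $b \to 1 \to a \to 4$ is present and perfectly legal. What saves the argument is precisely what your preceding sentence already says: a shortcut starting at the source $b$ needs the closing arc $b \to v_t$, forcing $v_t \in [l,r]$, and monotonicity of labels then traps the entire clique portion of the path inside $[l,r]$ unless it passes through a co-interval vertex $a$, in which case the path witnesses $l \le m_a$ and the closing arc witnesses $r \ge n_a$, contradicting (ii); paths with two $I$-vertices but no closing arc are harmless. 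So the gap you flagged as the main obstacle does close, and the only repairs needed are the misidentified closing arc and replacing the ``at most one $I$-vertex'' claim by ``at most one co-interval vertex, plus possibly one interval source at the start, and the mixed case admits no closing arc.''
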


Further, in \cite{tithi_splitcom}, Theorem \ref{Word_split_graph} is extended to split comparability graphs as stated in the following theorem.

\begin{theorem}[\cite{tithi_splitcom}]\label{coro_3}
	Let $G = (I \cup C, E)$ be a split graph. Then, $G$ is transitively orientable if and only if the vertices of $C$ can be labeled from $1$ to $k = |C|$ such that the following properties $({\rm i})-({\rm v})$ hold: For $a, b \in I$,
	\begin{enumerate}[label=\rm (\roman*)]
		\item \label{pt_1} The neighborhood $N(a)$ has one of the following forms: $[1, m] \cup [n, k]$ for $m < n$, $[1, r]$ for $r < k$, or $[l, k]$ for $l > 1$.
		
		\item \label{pt_2} If $N(a) = [1, r]$ and $N(b) = [l, k]$, for $r < k$ and $l > 1$, then $r < l$.
		
		\item \label{pt_3} If $N(a) = [1, m] \cup [n, k]$ and $N(b) = [1, r]$,  for $r < k$ and $m < n$, then $r < n$.
		
		\item \label{pt_4} If $N(a) = [1, m] \cup [n, k]$ and $N(b) = [l, k]$, for $l > 1$ and $m < n$, then $m < l$.
		
		\item \label{pt_5} If $N(a) = [1, m] \cup [n, k]$ and  $N(b) = [1, m'] \cup [n', k]$, for $m < n$ and $m' < n'$, then $m < n'$ and $m' < n$.
	\end{enumerate}
\end{theorem}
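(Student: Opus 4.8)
The plan is to prove both implications directly from the poset structure, using that in any transitive orientation of a split graph the clique $C$ becomes a chain and the independent set $I$ an antichain.

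For the forward direction, I would fix a transitive orientation of $G$, that is, a strict partial order $P$ whose comparability graph is $G$, and label the vertices of $C$ as $c_1 < c_2 < \cdots < c_k$ according to the chain order they induce in $P$. For $a \in I$, put $U(a) = \{i : c_i < a\}$ and $D(a) = \{i : a < c_i\}$, so that $N(a) = U(a) \cup D(a)$. Transitivity along the chain forces $U(a)$ to be a prefix $[1,q]$ and $D(a)$ a suffix $[p,k]$ with $q < p$: from $a < c_i$ we get $a < c_j$ for all $j \ge i$, and from $c_i < a$ we get $c_j < a$ for all $j \le i$. Since $C$ is inclusion-wise maximal we have $N(a) \ne [1,k]$, and this leaves exactly the three shapes listed in (i). Conditions (ii)--(v) then all follow from a single observation: distinct $a, b \in I$ are incomparable in $P$, so there is no index $i$ with $a < c_i < b$ (nor with $b < c_i < a$), since such an $i$ would force $a$ and $b$ comparable by transitivity. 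Writing this out for the four shape-combinations that appear in (ii)--(v) yields precisely the required inequalities.

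For the converse I would start from a labeling satisfying (i)--(v) and build a candidate partial order $P$: order $C$ by $c_1 < \cdots < c_k$; for $N(a) = [1,m] \cup [n,k]$ set $c_i < a$ for $i \le m$ and $a < c_j$ for $j \ge n$; for $N(a) = [1,r]$ set $c_i < a$ for $i \le r$; for $N(a) = [l,k]$ set $a < c_j$ for $j \ge l$; and leave all pairs in $I$ incomparable. Because the prefix and suffix of each two-interval neighborhood are disjoint ($m < n$), this relation is antisymmetric, and by construction its comparability graph is exactly $G$. What remains is transitivity. Among the triples $x < y < z$ that could fail it, the ones of the form $c_i < c_j < a$, $c_i < a < c_j$, and $a < c_i < c_j$ are immediate from the prefix/suffix description (using $m < n$ in the middle case), while any triple with two $I$-vertices other than $a < c_i < b$ is vacuous, since $I$-vertices are pairwise incomparable.

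The crux --- and the step I expect to be the main obstacle --- is to show that (ii)--(v) rule out exactly this configuration $a < c_i < b$ for distinct $a, b \in I$. Such a triple requires $a$ to lie below some clique vertex (so $a$ is of suffix or two-interval type) and $b$ to lie above some clique vertex (so $b$ is of prefix or two-interval type); writing $s_a$ for the least index with $a < c_{s_a}$ and $e_b$ for the greatest index with $c_{e_b} < b$, the triple exists precisely when $s_a \le e_b$. Excluding it for every ordered pair therefore amounts to requiring $s_a > e_b$ in each of the four type combinations, and I would match these one-to-one with (ii) (suffix versus prefix), (iii) (two-interval versus prefix), (iv) (suffix versus two-interval), and (v) (two-interval versus two-interval), noting that (v) must supply both $m < n'$ and $m' < n$ to cover the two orderings of such a pair. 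Once this matching is checked, $P$ is a strict partial order whose comparability graph is $G$, and hence $G$ is a comparability graph, completing the argument.
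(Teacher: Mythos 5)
The paper itself contains no proof of this statement: Theorem~\ref{coro_3} is imported verbatim from the companion preprint \cite{tithi_splitcom} and is used as a black box (only in the proof of Theorem~\ref{Comp_F_0}, to verify a labelling for $F_1(5)$), so there is no in-paper argument to compare yours against. Judged on its own merits, your proof is correct and essentially complete in outline. In the forward direction, labelling $C$ by its chain order and decomposing $N(a)=U(a)\cup D(a)$ into a prefix $U(a)$ and a suffix $D(a)$ is exactly right, and the single incomparability principle --- no $c_i$ with $a<c_i<b$ for distinct $a,b\in I$ --- does yield (ii)--(v); the one step you should make explicit is that the shapes force the sides: a neighborhood $[1,r]$ with $r<k$ forces $D(a)=\varnothing$ (a nonempty suffix $[p,k]$ cannot lie inside $[1,r]$), dually $[l,k]$ with $l>1$ forces $U(a)=\varnothing$, and for $[1,m]\cup[n,k]$ the maximality of $C$ gives a nonempty gap so $U(a)=[1,m]$, $D(a)=[n,k]$ exactly. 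Without this forcing, ``writing out the four shape-combinations'' is not yet determined. In the converse, your order is the natural one, the easy transitivity cases are as routine as you claim, and your threshold reformulation ($s_a>e_b$ for every ordered pair, matched one-to-one with (ii)--(v), with (v) supplying both orderings) checks out in all four type combinations; antisymmetry via $m<n$ and the fact that the comparability graph of the resulting order is exactly $G$ are also fine. One boundary remark, shared with the statement itself rather than a flaw in your argument: an isolated vertex $a\in I$ has $N(a)=\varnothing$, which is not of any of the three forms in (i); as in Theorem~\ref{Word_split_graph}, where $N(a)=[l,r]$ presumes $l\le r$, the result implicitly assumes nonempty neighborhoods, and your proof inherits this convention harmlessly.
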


\section{Representation Number}\label{r_no}

Let $G = (I \cup C, E)$ be a word-representable split graph. Then, in view of Theorem \ref{Word_split_graph}, the vertices of $C$ can be labeled from $1$ to $k = |C|$ such that it satisfies the three properties. Moreover, such labelling of the vertices of $C$ can be found in polynomial time \cite{Kitaev_2024}. In what follows, we consider the aforementioned labelling of the vertices of $C$. Note that $N(a) \subseteq C$, for each $a \in I$. We now consider the following sets:
\begin{align*}
	A & = \{a \in I \mid N(a) = [1, m] \cup [n, k], \ \text{for some} \ m < n\} \\
	B & = \{a \in I \mid N(a) = [l, r], \ \text{for some} \ l \le r\}
\end{align*} 
Then, from Theorem \ref{Word_split_graph}, we have $A \cap B = \varnothing$ and $I = A \cup B$. For $a \in A$, let $N(a) = [1, m_a] \cup [n_a, k]$ and for $a \in B$, let $N(a) = [l_a, r_a]$. By Theorem \ref{Word_split_graph} \ref{point_3}, since $m_a < n_b$ for any $a, b \in A$, we have the following remark.
\begin{remark}\label{prop_1}
	$\max\{m_a \mid a \in A \} < \min\{n_a \mid a \in A\}$.
\end{remark}

\subsection{3-Uniform Word-Representant of $G$}

Given a split graph $G = (I \cup C, E)$ with semi-transitive orientation (and hence, a labelling of the vertices of $C$), Algorithm \ref{Algo_1} constructs a $3$-uniform word-representant of $G$. In what follows, $w$ refers to the output of Algorithm \ref{Algo_1}.

\begin{algorithm}[!htb]\label{Algo_1}
	\caption{Constructing a 3-uniform word-representant of a split graph.}
	\label{algo-1}
	\KwIn{A word-representable split graph $G = (I \cup C, E)$ with $1, \ldots, k$ as the labels of the vertices of $C$.}
	\KwOut{A 3-uniform word $w$ representing $G$.}

	Initialize $p_1$, $p_2$ and $p_3$ with the permutation $12 \cdots k$ and update them with the vertices of $I$, as per the following.\\
	Initialize $d = 1$.\\
	\For{$a \in I$}{\If{$a \in A$}{ Let $N(a) = [1, m] \cup [n, k]$.\\
	\If{$m > d$}{Assign $d = m$.}

	Replace $m$ in $p_1$ with $ma$.\\
	Replace $n$ in $p_2$ with $an$.}
	\Else{Let $N(a) = [l, r]$.\\
	Replace $l$ in $p_1$ with $al$.\\
	Replace $r$ in $p_2$ with $ra$.}
	}
	Replace $d$ in $p_3$ with $d (p_1|_{A})^R$.\\
	\Return{{\rm the 3-uniform word} $p_1(p_1|_{B})^Rp_2p_3$}	
\end{algorithm}

\begin{remark}\label{position_C}
	In Algorithm \ref{Algo_1}, note that the order of the vertices of $C$, i.e., $12 \cdots k$, in the words $p_1, p_2$, and $p_3$ do not alter when they are updated. Hence, we have  $w|_{C}  = 12 \cdots k12 \cdots k12 \cdots k$.
\end{remark}

\begin{remark}\label{Occurrence_AB}
	Note that both the words $p_1$ and $p_2$, eventually, are permutations on the set $V$, while the word $p_3$ is a permutation only on $C \cup A$. Observe that the elements of $B$ do not occur in $p_3$.
\end{remark}

\begin{lemma}\label{lemma_0}
	If $a \in A$, then $m_aan_a$ is a subword of each of $p_1$, $p_2$, and $p_3$.
\end{lemma}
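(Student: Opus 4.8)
The plan is to verify the subword relation $m_a a n_a \ll q$ separately for each of the three words $q \in \{p_1, p_2, p_3\}$, using throughout the observation of Remark \ref{position_C} that the clique letters keep their natural order $1, 2, \ldots, k$ in every one of $p_1$, $p_2$, $p_3$, together with the defining inequality $m_a < n_a$ for $a \in A$. For $p_1$, the algorithm replaces $m_a$ by $m_a a$, so $a$ sits immediately to the right of $m_a$; since the clique order is preserved and $m_a < n_a$, the letter $n_a$ occurs to the right of $m_a$, and hence to the right of $a$, giving $m_a a n_a \ll p_1$. For $p_2$, the algorithm replaces $n_a$ by $a n_a$, so $a$ sits immediately to the left of $n_a$; as $m_a < n_a$, the letter $m_a$ precedes $n_a$ in clique order, and because every clique letter with label $< n_a$ (in particular $m_a$) appears before the insertion point of $a$, we obtain $m_a$ before $a$ before $n_a$, i.e. $m_a a n_a \ll p_2$.

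The main work, and the step I expect to be the crux, is the case of $p_3$. Here $p_3$ is obtained from $1 2 \cdots k$ by replacing $d$ with $d\,(p_1|_A)^R$, so it has the block form consisting of a prefix $1, \ldots, d$, then the middle block $(p_1|_A)^R$, then a suffix $d+1, \ldots, k$, and each $a \in A$ occurs inside the middle block. The plan is to show $m_a \le d < n_a$. The inequality $m_a \le d$ holds because $d$ is updated to the running maximum of the left endpoints $m$ over all $a \in A$ (the step assigning $d = m$ whenever $m > d$), so that $d = \max\{m_a \mid a \in A\}$; the strict inequality $d < n_a$ then follows from Remark \ref{prop_1}, which gives $d = \max\{m_a\} < \min\{n_a\} \le n_a$. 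Consequently $m_a$ lies in the prefix $\{1, \ldots, d\}$, strictly to the left of the block, while $n_a$ lies in the suffix $\{d+1, \ldots, k\}$, strictly to the right of the block, and $a$ lies inside the block; reading left to right yields $m_a a n_a \ll p_3$.

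I expect the only subtleties to be bookkeeping ones localized in the $p_3$ case: confirming that the running variable $d$ indeed equals $\max\{m_a \mid a \in A\}$ (the initialization $d = 1$ is harmless, since $m_a \ge 1$ for every $a \in A$, as otherwise $[1,m_a]$ would be empty and $a$ would belong to $B$), and confirming that Remark \ref{prop_1} supplies exactly the strict inequality $d < n_a$ needed to separate the suffix from the block so that $n_a$ genuinely follows $a$. Once these two facts are in place, the three subword verifications combine to give the statement.
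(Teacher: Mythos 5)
Your proof is correct and follows essentially the same route as the paper's: the cases of $p_1$ and $p_2$ are handled via the replacement steps (lines 9 and 10 of Algorithm \ref{algo-1}), and the $p_3$ case via the chain $m_a \le d < n_a$ combined with Remark \ref{prop_1} and the final replacement of $d$ by $d\,(p_1|_A)^R$. Your additional bookkeeping (that $d$ equals the running maximum $\max\{m_a \mid a \in A\}$ and that the initialization $d=1$ is harmless) only makes explicit what the paper leaves implicit.
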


\begin{proof}
	Note that  $N(a) = [1, m_a] \cup [n_a, k]$, for $m_a < n_a$. Thus, as per line 1 of Algorithm \ref{Algo_1}, we have $m_an_a \ll p_1$, $m_an_a \ll p_2$, and $m_an_a \ll p_3$. As $a \in A$, replacing $m_a$ in $p_1$ with $m_aa$ (see line 9), and replacing $n_a$ in $p_2$ with $an_a$ (see line 10), we have $m_aan_a \ll p_1$, and $m_aan_a \ll p_2$, respectively. Further, in view of Remark \ref{prop_1}, we have $m_a \le d < n_a$ so that we have $m_a d n_a \ll p_3$. Now, as per line 18 of Algorithm \ref{Algo_1}, we have $m_aan_a \ll m_a d an_a \ll p_3$. \qed
\end{proof}

\begin{lemma}\label{lemma_1}
	For $a, b \in V$, if $a$ and $b$ are adjacent in $G$, then $a$ and $b$ alternate in $w$.
\end{lemma}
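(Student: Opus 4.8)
The plan is to argue by cases according to which of the three sets $C$, $A$, $B$ the vertices $a$ and $b$ belong to. Since $I = A \cup B$ induces an independent set, the only adjacent pairs are of three kinds: both in $C$, one in $C$ and one in $A$, or one in $C$ and one in $B$ (the case $a,b \in I$ is vacuous). In each kind I will locate the three occurrences of $a$ and of $b$ inside $w = p_1(p_1|_B)^R p_2 p_3$ and read off $w|_{\{a,b\}}$, checking that it equals $xyxyxy$ for the appropriate ordering of $\{x,y\} = \{a,b\}$.

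The case $a,b \in C$ is handled directly by Remark~\ref{position_C}: since $w|_C = 12\cdots k\,12\cdots k\,12\cdots k$, for $i<j$ in $C$ we get $w|_{\{i,j\}} = ijijij$. The substance lies in the mixed cases, where I will use the explicit placements produced by Algorithm~\ref{Algo_1}. For $b \in A$ with $N(b) = [1,m_b]\cup[n_b,k]$, the letter $b$ sits immediately after $m_b$ in $p_1$ (line 9), immediately before $n_b$ in $p_2$ (line 10), and inside the inserted block $(p_1|_A)^R$, hence between $d$ and $d+1$, in $p_3$ (line 18). For $b \in B$ with $N(b)=[l_b,r_b]$, the letter $b$ sits immediately before $l_b$ in $p_1$, somewhere inside $(p_1|_B)^R$, and immediately after $r_b$ in $p_2$. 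I will also record, via Remark~\ref{Occurrence_AB}, that a clique vertex $c$ never occurs in $(p_1|_B)^R$, that a $B$-vertex never occurs in $p_3$, and that an $A$-vertex never occurs in $(p_1|_B)^R$; this tells us exactly which occurrences survive in $w|_{\{b,c\}}$.

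For $c \in C$ adjacent to $b \in A$, adjacency means $c \le m_b$ or $c \ge n_b$, and the key numerical fact is $m_b \le d < n_b$, obtained from $d = \max\{m_a \mid a \in A\}$ together with Remark~\ref{prop_1}. If $c \le m_b$, then $c$ precedes $b$ in each of $p_1$, $p_2$, $p_3$ (using $c \le m_b$, $c < n_b$, and $c \le d$ respectively), giving $w|_{\{b,c\}} = cbcbcb$; if $c \ge n_b$, then $c$ follows $b$ in all three, giving $bcbcbc$. For $c \in C$ adjacent to $b \in B$, adjacency means $l_b \le c \le r_b$; then $c$ follows $b$ in $p_1$ (as $c \ge l_b$ and $b$ is just before $l_b$), $b$ occurs alone in $(p_1|_B)^R$, $c$ precedes $b$ in $p_2$ (as $c \le r_b$ and $b$ is just after $r_b$), and $c$ occurs alone in $p_3$, so that $w|_{\{b,c\}} = (bc)(b)(cb)(c) = bcbcbc$. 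In every adjacent case the restricted word alternates.

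I expect the main obstacle to be careful position bookkeeping rather than any conceptual difficulty. In particular, when several $A$-vertices share the same value $m$ (or several $B$-vertices share the same $l$), one must check that the repeated replacements keep $b$ strictly inside the gap between $m_b$ and $m_b+1$ (respectively, immediately before $l_b$), so that the before/after comparisons with a clique vertex $c$ are valid independently of the order in which the for-loop processes $I$. The $p_3$ contribution needs the sharpest care: its placement relies on $m_b \le d < n_b$, so the payoff of the earlier setup is precisely the verification of these inequalities through Remark~\ref{prop_1} and the update rule for $d$.
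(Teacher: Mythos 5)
Your proposal is correct and takes essentially the same approach as the paper: the same case split (both vertices in $C$; an $A$-vertex versus a clique vertex, subdivided by $c \le m_b$ or $c \ge n_b$; a $B$-vertex versus a clique vertex) and the same positional bookkeeping across the factors $p_1$, $(p_1|_B)^R$, $p_2$, $p_3$, with your key inequality $m_b \le d < n_b$ being precisely the content of the paper's Lemma~\ref{lemma_0}. The only cosmetic difference is that you inline that lemma's placement argument (and explicitly flag the harmless issue of several letters stacking at the same insertion slot) instead of invoking it as a separate statement.
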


\begin{proof}
	 As $I$ is an independent set in $G$, note that both $a$ and $b$ cannot be in $I$. 	 
	 Suppose both $a, b \in C$. As the vertices of $C$ are labeled from $1$ to $k$, we have $a = i$ and $b = j$, for some $1 \le i, j \le k$. Thus, in view of Remark \ref{position_C}, we have $w|_{\{a, b\}} = ababab$, if $a < b$, or $w|_{\{a, b\}} = bababa$, if $b < a$. Hence, in this case, $a$ alternates with $b$ in $w$. If $a \in I$ and $b \in C$, then we consider the following cases:
	\begin{itemize}
		\item Case 1: $a \in A$. Note that $b \in N(a) = [1, m_a] \cup [n_a, k]$, for $m_a < n_a$. 
		If $b \in [1, m_a]$, as per line 1 of Algorithm \ref{Algo_1}, we have $bm_a \ll p_1$, $bm_a \ll p_2$, and $bm_a \ll p_3$. Hence, in view of Lemma \ref{lemma_0}, we have $bm_aan_a \ll p_1$, $bm_aan_a \ll p_2$, and $bm_aan_a \ll p_3$. As $p_1p_2p_3 \ll w$ and $w$ is a $3$-uniform word, we have $w|_{\{a, b\}} = bababa$. Thus, $a$ and $b$ alternate in $w$.
		Similarly, if $b \in [n_a, k]$, we can observe that $m_aan_ab \ll p_1$ and $m_aan_ab \ll p_2$, and $m_aan_ab \ll p_3$. Hence, $w|_{\{a, b\}} = ababab$ so that $a$ and $b$ alternate in $w$.
		
    	\item Case 2: $a \in B$. Note that $b \in N(a) =[l_a, r_a]$, for $l_a \le r_a$. Thus, by line 1 of Algorithm \ref{Algo_1}, we have $l_abr_a \ll p_1$, $l_abr_a \ll p_2$, and $l_abr_a \ll p_3$. As $a \in B$, replacing $l_a$ in $p_1$ with $al_a$ (see line 14), and replacing $r_a$ in $p_2$ with $r_aa$ (see line 15), we have $al_abr_a \ll p_1$ and $l_abr_aa \ll p_2$, respectively. Finally, as $w = p_1(p_1|_{B})^Rp_2p_3$, we have $w|_{\{a, b\}} = ababab$. Thus, $a$ and $b$ alternate in $w$.	
	\end{itemize}
\qed 
\end{proof}

\begin{lemma}\label{lemma_2}
	If $a, b \in I$, then $a$ and $b$ do not alternate in $w$.
\end{lemma}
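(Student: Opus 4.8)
The goal is to show that any two vertices $a, b \in I$ fail to alternate in the output word $w = p_1(p_1|_B)^R p_2 p_3$. Since $I = A \cup B$, I would organize the proof into three cases according to whether both vertices lie in $A$, both lie in $B$, or one lies in each set. The overarching strategy is to track the total number of occurrences of each letter contributed by the four blocks $p_1$, $(p_1|_B)^R$, $p_2$, $p_3$: a letter of $A$ appears in $p_1, p_2, p_3$ (three times) and a letter of $B$ appears in $p_1, (p_1|_B)^R, p_2$ (three times), and then compare the relative orders of $a$ and $b$ block-by-block to detect a non-alternating pattern such as $aa$ or $bb$.

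The cleanest case is when exactly one of $a, b$ lies in $B$. Say $a \in B$ and $b \in A$; then $a$ occurs in $p_1$, in $(p_1|_B)^R$, and in $p_2$, but by Remark~\ref{Occurrence_AB} it does \emph{not} occur in $p_3$, whereas $b$ occurs in $p_3$. I would argue that the last occurrence of $a$ (inside $p_2$) is followed only by the block $p_3$, which contains $b$ but not $a$; examining the tail $p_2 p_3$ restricted to $\{a,b\}$ should directly exhibit a repeated letter. In fact, since $a$ appears three times total and $b$ appears three times total but all of $b$'s ``mirror-free'' structure differs, counting plus a local look at where the mirrored block $(p_1|_B)^R$ sits should force a $\ldots a a \ldots$ or $\ldots b b\ldots$ pattern.

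For the case $a, b \in B$, the key device is the mirrored block $(p_1|_B)^R$. Both $a$ and $b$ appear in $p_1$ in some order, say $a$ before $b$ (i.e.\ $l_a \le l_b$ in $p_1$); then in $(p_1|_B)^R$ they appear in the reversed order $b$ before $a$. Concatenating $p_1 (p_1|_B)^R$ restricted to $\{a,b\}$ yields a pattern like $ab\,ba = abba$, which already contains the non-alternating factor $bb$. This immediately kills alternation regardless of what $p_2$ contributes, so this case is essentially a one-line observation once the reversal is spelled out. For the case $a, b \in A$, neither appears in $(p_1|_B)^R$, but both appear in all of $p_1, p_2, p_3$; here I would use the specific placements from lines 9, 10, and 18 of Algorithm~\ref{Algo_1} together with the reversed sub-block $(p_1|_A)^R$ inserted into $p_3$ (line 18) to manufacture the non-alternation, comparing the order of $a, b$ in $p_1$ against the reversed order inside $p_3$.

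The main obstacle I anticipate is the $a, b \in A$ case, because unlike the $B$ case the reversal that helps us lives inside $p_3$ (the factor $(p_1|_A)^R$), and its effect must be weighed against the orders already fixed in $p_1$ and $p_2$; one must also handle the interaction with the letter $d$ and verify that the positions of $m_a, n_a, m_b, n_b$ (governed by Remark~\ref{prop_1} and Theorem~\ref{Word_split_graph}\ref{point_3}) are consistent with producing a repeated letter. I would pin down, for $a, b \in A$ with (say) $m_a \le m_b$, exactly how $a$ and $b$ are interleaved in $p_1$ versus in the reversed tail of $p_3$, and show the two orderings are opposite, yielding a pattern of the form $ab \cdots ba$ over the whole word $w$; the careful bookkeeping of which of $a,b$ comes first in each of the three permutations is where the real work lies.
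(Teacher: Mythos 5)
Your cases for $a,b\in B$ and $a,b\in A$ are sound and match the paper: for $B$ the reversal $(p_1|_B)^R$ immediately after $p_1$ forces the factor $abba$, and for $A$ the reversed block $(p_1|_A)^R$ inside $p_3$ makes the order of $a,b$ in $p_3$ opposite to that in $p_1$, so $w|_{\{a,b\}}$ starts $ab$ and ends $ba$, which is non-alternating whatever $p_2$ contributes (the difficulty you anticipate with $d$ and Remark~\ref{prop_1} does not arise; the paper dispatches this case in two lines, ignoring $p_2$'s order entirely).

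The genuine gap is in the mixed case, which you call ``the cleanest'' and try to settle by occurrence-counting plus the observation that $B$-letters miss $p_3$ (Remark~\ref{Occurrence_AB}). That argument fails: with $a\in A$ and $b\in B$, the block structure gives $w|_{\{a,b\}}$ one letter of each in $p_1$, a lone $b$ in $(p_1|_B)^R$, one of each in $p_2$, and a lone $a$ in $p_3$, and the configuration ``$b$ before $a$ in $p_1$, $a$ before $b$ in $p_2$'' yields exactly $bababa$ --- a perfectly alternating word. So no amount of counting or local inspection of the tail $p_2p_3$ can exhibit a repeated letter unconditionally; one must \emph{rule out} that configuration, and this is precisely where the graph-theoretic hypothesis enters. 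The paper invokes Theorem~\ref{Word_split_graph}\ref{point_2}: since $a,b\in I$ with $N(a)=[1,m_a]\cup[n_a,k]$ and $N(b)=[l_b,r_b]$, either $l_b>m_a$ or $r_b<n_a$. In the first subcase lines 9 and 14 of Algorithm~\ref{Algo_1} force $m_a a b l_b\ll p_1$, so $ab\ll p_1$ and the $b$ in $(p_1|_B)^R$ produces the factor $abb$; in the second, lines 10 and 15 force $r_b b a n_a\ll p_2$, so $ba\ll p_2$ and the $a$ in $p_3$ produces $baa$. Your proposal never uses property \ref{point_2} (nor any property of the labelling) in this case, so as written it cannot close it; note also that the excluded configuration corresponds to $l_b\le m_a$ and $n_a\le r_b$, i.e.\ labellings the characterization forbids, which shows the appeal to Theorem~\ref{Word_split_graph}\ref{point_2} is not optional bookkeeping but the heart of the case.
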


\begin{proof}
	  Since $I = A \cup B$, we consider the following cases.
	\begin{itemize}
		\item Case 1: $a, b \in A$. Since both $a$ and $b$ appear in $p_1$, without loss of generality, suppose $ab \ll p_1$. Since $(p_1|_A)^R \ll p_3$ ( by line 18 of Algorithm \ref{Algo_1}), we have $ba \ll p_3$. 
		Thus, if $ab \ll p_2$, we have $ababba \ll p_1p_2p_3 \ll w$, and if $ba \ll p_2$, we have $abbaba \ll p_1p_2p_3 \ll w$. As $w$ is a $3$-uniform word, in any case, we can see that $a$ and $b$ do not alternate in $w$.
		
		\item Case 2: $a, b \in B$. Since both $a$ and $b$ appear in $p_1$, without loss of generality, suppose $ab \ll p_1$. Then, we have $ba \ll (p_1|_{B})^R$. Since $w = p_1(p_1|_{B})^Rp_2p_3$, in view of Remark \ref{Occurrence_AB}, $abba$ is a factor of $w|_{\{a, b\}}$. Hence, $a$ and $b$ do not alternate in $w$.
		
		\item Case 3:  $a \in A$ and $b \in B$. Note that $N(a) = [1, m_a] \cup [n_a, k]$, for $m_a < n_a$, and $N(b) = [l_b, r_b]$, for $l_b \le r_b$. Then, from Theorem \ref{Word_split_graph} \ref{point_2}, we have $l_b > m_a$ or $r_b < n_a$. Accordingly, we consider the following cases.

		\item Subcase 3.1: $l_b > m_a$. From line 1 of Algorithm \ref{Algo_1}, we have $m_al_b \ll p_1$. As $a \in A$, replacing $m_a$ in $p_1$ with $m_aa$ we have, $m_aal_b \ll p_1$ (see line 9). Further, as $b \in B$, replacing $l_b$ in $p_1$ with $bl_b$ we have, $m_aabl_b \ll p_1$ (see line 14) so that $ab \ll p_1$. Since $w = p_1(p_1|_{B})^Rp_2p_3$, we have $abbp_2p_3 \ll w$. Thus, in view of Remark \ref{Occurrence_AB},  $abb$ is a factor of $w|_{\{a, b\}}$. Hence, $a$ and $b$ do not alternate in $w$.
		
		\item Subcase 3.2: $r_b < n_a$. From line 1 of Algorithm \ref{Algo_1}, we have $r_bn_a \ll p_2$. As $a \in A$, replacing $n_a$ in $p_2$ with $an_a$, we have $r_ban_a \ll p_2$ (see line 10). Further, as $b \in B$, replacing $r_b$ in $p_2$ with $r_bb$, we have $r_bban_a \ll p_2$ (see line 15) so that $ba \ll p_2$. Since $w = p_1(p_1|_{B})^Rp_2p_3$, in view of Remark \ref{Occurrence_AB}, $baa$ is a factor of $w|_{\{a, b\}}$. Hence, $a$ and $b$ do not alternate in $w$.
\end{itemize}
Thus, in any case, $a$ and $b$ do not alternate in $w$. \qed
\end{proof}

\begin{lemma}\label{lemma_3}
	For $a \in I$ and $b \in C$, if $a$ and $b$ are not adjacent in $G$, then $a$ and $b$ do not alternate in $w$.
\end{lemma}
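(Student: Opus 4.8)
The plan is to read off the restriction $w|_{\{a,b\}}$ block by block along the factorization $w = p_1(p_1|_B)^Rp_2p_3$, and to exhibit two equal consecutive letters in it. Since $a \in I$, $b \in C$, and $a,b$ are non-adjacent, we have $b \notin N(a)$; writing $b = j$ with $j \in [1,k]$, the value $j$ lies strictly outside the neighbourhood interval(s) of $a$. Throughout I would use Remark \ref{position_C} (the clique letters keep the order $12\cdots k$ in each of $p_1,p_2,p_3$) together with the explicit insertion rules of Algorithm \ref{Algo_1} to locate, within each block, the single occurrence of $a$ relative to the single occurrence of $b=j$. The argument then splits according to whether $a \in A$ or $a \in B$.

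First suppose $a \in A$, so $N(a) = [1,m_a]\cup[n_a,k]$ and non-adjacency forces $m_a < j < n_a$. By line 9 the letter $a$ is inserted immediately after $m_a$ in $p_1$, and since $j > m_a$ this puts $a$ before $b$, so the $p_1$-block contributes $ab$. By line 10 the letter $a$ is inserted immediately before $n_a$ in $p_2$, and since $j < n_a$ this puts $b$ before $a$, so the $p_2$-block contributes $ba$. The middle block $(p_1|_B)^R$ contributes nothing to $w|_{\{a,b\}}$, since $a \notin B$ and $b$ is a clique letter. Hence $w|_{\{a,b\}}$ begins with $abba$, which already contains the factor $bb$; whatever $p_3$ adds afterwards is irrelevant, and $a,b$ do not alternate. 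Note that this case needs no analysis of $p_3$.

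Next suppose $a \in B$, so $N(a) = [l_a,r_a]$ and non-adjacency forces $j < l_a$ or $j > r_a$. Here $a$ occurs three times, in $p_1$ (line 14), in $(p_1|_B)^R$, and in $p_2$ (line 15), while $b=j$ occurs in $p_1$, $p_2$, and $p_3$; moreover $b$ is absent from $(p_1|_B)^R$ and $a$ is absent from $p_3$ (Remark \ref{Occurrence_AB}). Using that $a$ sits just before $l_a$ in $p_1$ and just after $r_a$ in $p_2$, a short check of the two subcases should give $w|_{\{a,b\}} = baabab$ when $j < l_a$ and $w|_{\{a,b\}} = abaabb$ when $j > r_a$. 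In either case the \emph{extra} copy of $a$ coming from $(p_1|_B)^R$ lands next to another copy of $a$ (from $p_1$ when $j<l_a$, from $p_2$ when $j>r_a$), producing the factor $aa$; hence $a,b$ do not alternate.

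The routine part is the bookkeeping of ``$a$ before/after $b$'' inside each block, which follows mechanically from the insertion rules and the fixed clique order guaranteed by Remark \ref{position_C}. The step I expect to require the most care is the case $a \in B$: one must track the third occurrence of $a$ contributed by $(p_1|_B)^R$ and verify that, on whichever side of $[l_a,r_a]$ the vertex $b$ falls, this occurrence is forced to be adjacent to another $a$ in the restriction. Establishing this uniformly across both subcases is the crux, whereas the case $a \in A$ is immediate once one observes that $p_1$ and $p_2$ already supply the prefix $abba$.
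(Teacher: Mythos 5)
Your proposal is correct and follows essentially the same argument as the paper: in both, one tracks the single occurrence of $a$ relative to $b$ in each block of $w = p_1(p_1|_{B})^Rp_2p_3$ via the insertion rules, obtaining the factor $abba$ (from $ab \ll p_1$, $ba \ll p_2$) when $a \in A$, and the restrictions $baabab$ and $abaabb$ in the two subcases when $a \in B$ --- exactly the words exhibited in the paper's proof. The bookkeeping you flag as routine (the middle block contributing nothing when $a \in A$, and $a \notin p_3$, $b \notin (p_1|_{B})^R$ when $a \in B$) is precisely what the paper handles via Remark \ref{Occurrence_AB}, so there is no gap.
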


\begin{proof}
	 As $I = A \cup B$, we consider the following cases.
	\begin{itemize}
		\item Case 1: $a \in A$. Note that $N(a) = [1, m_a] \cup [n_a, k]$, for $m_a < n_a$. As $b \in C$ and $b \notin N(a)$, we have $m_a < b < n_a$. Thus, $m_abn_a \ll p_1$, $m_abn_a \ll p_2$ and $m_abn_a \ll p_3$. Accordingly, replacing $m_a$ in $p_1$ with $m_aa$ (see line 9), and replacing $n_a$ in $p_2$ with $an_a$ (see line 10), we have $m_aabn_a \ll p_1$ and $m_aban_a \ll p_2$, respectively. Thus, $ab \ll p_1$ and $ba \ll p_2$ so that we have $abba \ll p_1p_2$.  Further, since $w = p_1(p_1|_{B})^Rp_2p_3$, in view of Remark \ref{Occurrence_AB},  it is evident that $abba$ is a factor of $w|_{\{a, b\}}$. Hence, $a$ and $b$ do not alternate in $w$.
		
		\item Case 2: $a \in B$. Note that $N(a) = [l_a, r_a]$, for $l_a \le r_a$. As $b \in C$ and $b \notin N(a)$, we have $b < l_a$ or $b > r_a$. Suppose $b < l_a$. Then, we have $bl_ar_a \ll p_1$, $bl_ar_a \ll p_2$, and $bl_ar_a \ll p_3$. Accordingly, replacing $l_a$ in $p_1$ with $al_a$ (by line 14), and replacing $r_a$ in $p_2$ with $r_aa$ (by line 15), we have $bal_ar_a \ll p_1$ and $bl_ar_aa \ll p_2$, respectively. Thus, $ba \ll p_1$ and $ba \ll p_2$. Further, since $w = p_1(p_1|_{B})^Rp_2p_3$, and $a \ll (p_1|_{B})^R$, $b \ll p_3$, we have $baabab \ll w$ so that $a$ and $b$ do not alternate in $w$. Similarly, if $b > r_a$, we can observe that $abaabb \ll w$ so that $a$ and $b$ do not alternate in $w$.	
	\end{itemize}
Thus, in any case, $a$ and $b$ do not alternate in $w$. \qed
\end{proof}

\begin{theorem}\label{3-word-rep}
	Let $G = (I \cup C, E)$ be a word-representable split graph. Then, $\mathcal{R}(G) \le 3$.
\end{theorem}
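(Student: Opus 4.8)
The plan is to show that the word $w$ produced by Algorithm \ref{Algo_1} is a word-representant of $G$ that uses exactly three copies of each vertex; this yields $\mathcal{R}(G) \le 3$ at once. So first I would confirm that $w = p_1(p_1|_B)^R p_2 p_3$ is genuinely $3$-uniform by counting occurrences class by class. Each vertex of $C$ occurs once in each of $p_1, p_2, p_3$ and nowhere else (Remark \ref{position_C}), hence exactly three times. Each $a \in A$ occurs once in each of $p_1, p_2, p_3$ but not in $(p_1|_B)^R$, again three times. Each $b \in B$ occurs once in $p_1$, once in $(p_1|_B)^R$, and once in $p_2$, but---by Remark \ref{Occurrence_AB}---not in $p_3$, for a total of three occurrences. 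Thus $w$ is $3$-uniform.

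Next I would verify the defining equivalence for word-representation: for $a, b \in V$, the vertices $a$ and $b$ alternate in $w$ if and only if $\overline{ab} \in E$. The ``only if'' direction---adjacency forces alternation---is precisely Lemma \ref{lemma_1}. For the converse I would argue by contrapositive, exhausting the ways a pair can be non-adjacent. Since $C$ is a clique, no two vertices of $C$ are non-adjacent, so any non-adjacent pair falls into exactly one of two cases. If both endpoints lie in $I$ (which covers all pairs inside $I$, as $I$ is independent), then $a$ and $b$ fail to alternate by Lemma \ref{lemma_2}. If one endpoint lies in $I$ and the other in $C$ and they are non-adjacent, then $a$ and $b$ fail to alternate by Lemma \ref{lemma_3}. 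These two cases are exhaustive for non-adjacent pairs, so non-adjacency always implies non-alternation.

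Combining the two directions, $w$ represents $G$; and since $w$ is $3$-uniform, $G$ is $3$-word-representable, whence $\mathcal{R}(G) \le 3$. The entire case analysis for both directions has already been discharged by the preceding lemmas, so no genuinely new argument is needed in the theorem itself---it is an assembly step. The only point demanding care, modest as it is, is the uniformity bookkeeping: because the elements of $B$ are deliberately suppressed from $p_3$ while simultaneously being duplicated through the factor $(p_1|_B)^R$, one must track each vertex class precisely to confirm the occurrence count is three and not two or four. That is the single place where the construction could silently go wrong, so I would treat it as the main checkpoint before invoking the lemmas.
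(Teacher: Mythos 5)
Your proposal is correct and follows essentially the same route as the paper: the theorem is an assembly step that combines Lemma~\ref{lemma_1} (adjacency implies alternation) with Lemmas~\ref{lemma_2} and~\ref{lemma_3} (the two exhaustive cases of non-adjacency, since $C$ is a clique) to conclude that the $3$-uniform word $w$ from Algorithm~\ref{Algo_1} represents $G$. Your explicit occurrence count verifying $3$-uniformity is a detail the paper leaves implicit (it follows from Remarks~\ref{position_C} and~\ref{Occurrence_AB}), but it is a sound and worthwhile check, not a departure from the paper's argument.
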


\begin{proof}
	Note that, if $a$ and $b$ are non-adjacent vertices of $G$, then either both $a, b \in I$ or one of $a$ and $b$ belongs to $I$ and the other one belongs to $C$. Thus, by lemmas \ref{lemma_1}, \ref{lemma_2}, and \ref{lemma_3}, we have the 3-uniform word $w$ represents $G$. Hence, $\mathcal{R}(G) \le 3$.  \qed   
\end{proof}

\subsection{Characterization of $\mathcal{R}(G) = 3$}

In the following, we characterize the word-representable split graphs with representation number three (cf. Theorem \ref{Rep_no_split}). Further, we give a characterization for split comparability graphs with representation number three (cf. Theorem \ref{char_split_com}).  

Consider the graph even $k$-sun from the class $\mathcal{C}$ depicted in Fig. \ref{fig9}. For an even integer $k \ge 4$, an even $k$-sun is the split graph $(I \cup C, E)$ with $I = \{a_1, \ldots, a_k\}$ and $C = \{c_1, \ldots, c_k\}$ such that $N(a_i) = \{c_i, c_{i+1}\}$, for each $1 \le i \le k-1$, and $N(a_k) = \{c_1, c_k\}$. We now recall the result \cite[Theorem 9]{Kitaev_2021} on word-representability of an even $k$-sun. 

\begin{lemma}[\cite{Kitaev_2021}]\label{evnksun}
	For $k \ge 4$, an even $k$-sun is a word-representable graph.
\end{lemma}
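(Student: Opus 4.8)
The plan is to verify the conditions of Theorem~\ref{Word_split_graph} directly for the even $k$-sun, using the canonical labelling of the clique vertices $c_1,\ldots,c_k$ as $1,\ldots,k$. Recall that the independent set is $I=\{a_1,\ldots,a_k\}$ with $N(a_i)=\{c_i,c_{i+1}\}$ for $1\le i\le k-1$ and $N(a_k)=\{c_1,c_k\}$. I would label $c_j$ by $j$ for each $j$, so that $N(a_i)=[i,i+1]$ for $1\le i\le k-1$, which is an interval of the form $[l,r]$ with $l=i$, $r=i+1$, and the only ``wrap-around'' vertex is $a_k$, whose neighborhood is $\{1,k\}=[1,1]\cup[k,k]$, i.e.\ of the form $[1,m]\cup[n,k]$ with $m=1<n=k$ (here $k\ge 4$ guarantees $m<n$). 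Thus under this labelling every $a_i$ satisfies Theorem~\ref{Word_split_graph}\ref{point_1}, with $A=\{a_k\}$ and $B=\{a_1,\ldots,a_{k-1}\}$.

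Next I would check conditions \ref{point_2} and \ref{point_3}. For \ref{point_3}, since $A=\{a_k\}$ is a singleton, there are no two distinct wrap-around vertices to compare, so the condition is vacuously satisfied. For \ref{point_2}, I must verify that for the single $a\in A$ (namely $a_k$, with $m=1$, $n=k$) and every $b=a_i\in B$ (with $N(a_i)=[i,i+1]$, so $l=i$, $r=i+1$), we have $l>m$ or $r<n$; that is, $i>1$ or $i+1<k$. This fails only if simultaneously $i\le 1$ and $i+1\ge k$, i.e.\ $i=1$ and $k\le 2$, which is impossible since $k\ge 4$. Hence \ref{point_2} holds for every $b\in B$.

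Having verified all three conditions of Theorem~\ref{Word_split_graph}, I would conclude that the even $k$-sun is word-representable. I expect the only delicate point to be confirming that $a_k$ genuinely has neighborhood of the split form $[1,m]\cup[n,k]$ with a strict inequality $m<n$ rather than a single interval: this is exactly where the hypothesis $k\ge 4$ enters, since for $k=2$ the set $\{1,k\}$ would collapse to an interval and the case analysis would differ. Once the labelling is fixed and this observation is made, the remaining checks are immediate, so no substantial obstacle arises. (Alternatively, one could exhibit an explicit word, for instance by running Algorithm~\ref{Algo_1} on this labelling, but verifying the combinatorial conditions of Theorem~\ref{Word_split_graph} is the cleaner route.)
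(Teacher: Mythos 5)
Your verification is correct, but note that the paper itself gives no proof of Lemma~\ref{evnksun}: it imports the result wholesale as \cite[Theorem 9]{Kitaev_2021}, where it is established in the source paper rather than re-derived here. Your route --- label $c_j$ by $j$, observe $N(a_i)=[i,i+1]$ for $1\le i\le k-1$ and $N(a_k)=[1,1]\cup[k,k]$, then check the three conditions of Theorem~\ref{Word_split_graph} --- is sound: condition \ref{point_1} holds by construction, condition \ref{point_3} is vacuous since $A=\{a_k\}$ is a singleton (and trivially satisfied for $a=b$, as $1<k$), and your analysis of condition \ref{point_2} is exactly right: failure would force $i\le 1$ and $i+1\ge k$ simultaneously, impossible for $k\ge 4$. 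This is precisely the technique the paper itself deploys for the companion lemmas on $F_1(k)$ and $F_2(k)$ (label the clique, read off interval neighborhoods, invoke Theorem~\ref{Word_split_graph}), so your argument fits the paper's framework and has the virtue of making the treatment of $\mathcal{C}_3$ self-contained where the paper relies on an external citation. Two minor points: the standing convention that $C$ be inclusion-wise maximal is satisfied here, since each $a_i$ has degree $2<k$; and your parenthetical that $k\ge 4$ ``guarantees $m<n$'' is slightly off --- $m=1<k=n$ holds already for $k\ge 2$, and what larger $k$ really buys is that $\{1,k\}$ is not a single interval, which happens from $k\ge 3$ onward. Indeed, your verification goes through verbatim for every $k\ge 3$ regardless of parity, which is consistent with the surrounding results: it is only odd suns \emph{with center} that fail to be word-representable, so the extra generality is a feature of your argument, not a flaw.
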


For an odd integer $k \ge 3$, an odd $k$-sun with center (see the depiction in Fig.~\ref{fig9}) is the split graph $(I \cup C, E)$ with $I = \{a_1, \ldots, a_k\}$ and $C = \{c, c_1, \ldots, c_k\}$ such that $N(a_i) = \{c_i, c_{i+1}\}$, for each $1 \le i \le k-1$ and $N(a_k) = \{c_1, c_k\}$. The vertex $c$ is called the center. We now recall the result \cite[Theorem 11]{Kitaev_2021} on word-representability of an odd $k$-sun with center. 

\begin{lemma}[\cite{Kitaev_2021}]
	For $k \ge 3$, an odd $k$-sun with center is a non-word-representable graph.
\end{lemma}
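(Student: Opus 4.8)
The plan is to argue by contradiction using the characterization in Theorem~\ref{Word_split_graph}. Here the clique is $C = \{c, c_1, \ldots, c_k\}$, so $|C| = k+1$; since every $a_i$ is adjacent to only two of the $k+1 \ge 4$ clique vertices, $C$ is inclusion-wise maximal and the theorem applies. Suppose the odd $k$-sun with center were word-representable. Then there is a bijective labelling $\phi \colon C \to [1, k+1]$ satisfying properties \ref{point_1}--\ref{point_3}. Because each vertex $a_i \in I$ satisfies $|N(a_i)| = 2$, property \ref{point_1} forces $\phi(N(a_i))$ to be either a pair of consecutive labels $\{t, t+1\}$ (the interval form $[l,r]$ with $r = l+1$) or the extreme pair $\{1, k+1\}$; indeed the only way a two-element set can have the form $[1,m] \cup [n, k+1]$ with $m < n$ is $m = 1$ and $n = k+1$.

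Next I would encode these constraints as a walk. Consider the auxiliary graph $H$ on the label set $[1, k+1]$ whose edges are exactly the consecutive pairs together with the wrap-around pair $\{1, k+1\}$; this $H$ is precisely the cycle $C_{k+1}$. The observation above says that for every $i$ the labels $\phi(c_i)$ and $\phi(c_{i+1})$ are adjacent in $H$, where the last edge uses $N(a_k) = \{c_1, c_k\}$ (so we read $c_{k+1} = c_1$). Since the neighborhoods $N(a_1), \ldots, N(a_k)$ trace out the $k$-cycle $c_1 c_2 \cdots c_k c_1$, the sequence $\phi(c_1), \phi(c_2), \ldots, \phi(c_k), \phi(c_1)$ is a closed walk of length $k$ in $H = C_{k+1}$.

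The contradiction then comes from a parity argument. Because $k$ is odd, $k+1$ is even, so $C_{k+1}$ is an even cycle and hence bipartite; every closed walk in a bipartite graph has even length. This is incompatible with a closed walk of odd length $k$, which yields the desired contradiction and shows that no labelling satisfying \ref{point_1}--\ref{point_3} can exist. Consequently the odd $k$-sun with center is not word-representable.

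I expect the only delicate point to be the case analysis in the first paragraph: verifying that a two-element neighborhood admits exactly the two shapes allowed by property \ref{point_1}, and confirming that $C$ is inclusion-wise maximal so that Theorem~\ref{Word_split_graph} is applicable. Once these are in place, the reduction to a closed walk in $C_{k+1}$ and the bipartiteness argument are routine. Notably, properties \ref{point_2} and \ref{point_3} are not even needed: property \ref{point_1} together with the cyclic structure of the neighborhoods already forecloses any valid labelling.
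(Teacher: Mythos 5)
Your proof is correct, but it cannot be ``essentially the same as the paper's'': the paper gives no proof of this lemma at all, importing it verbatim from \cite{Kitaev_2021} (Theorem 11 there). So your argument is a genuine self-contained alternative, and it checks out. The delicate points you flagged do go through: $C=\{c,c_1,\ldots,c_k\}$ is inclusion-wise maximal since each $a_i$ has degree $2<k+1$, so Theorem~\ref{Word_split_graph} applies to this partition; and a two-element neighborhood can only be a consecutive pair $\{t,t+1\}$ or the extreme pair $\{1,k+1\}$, because $[1,m]\cup[n,k+1]$ has $m+(k+2-n)\geq 2$ elements with equality exactly when $m=1$ and $n=k+1$. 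The neighborhoods $N(a_1),\ldots,N(a_k)$ then force $\phi(c_1),\phi(c_2),\ldots,\phi(c_k),\phi(c_1)$ to be a closed walk of odd length $k$ in the cycle $C_{k+1}$ on the labels, which is bipartite since $k+1$ is even---a clean contradiction. There is also no circularity in invoking Theorem~\ref{Word_split_graph}: the direction you use (word-representable implies the labelling exists) is established in \cite{Kitaev_2021,Kitaev_2024} via semi-transitive orientations, independently of this lemma. Your route is in the same spirit as the paper's own non-word-representability proofs for $M_{\rm II}(k)$, $M_{\rm III}(k)$ and $M_{\rm IV}$, which likewise derive contradictions from the labelling of Theorem~\ref{Word_split_graph}; but where the paper chases labels vertex by vertex around the rim, your parity reformulation gets the contradiction in one stroke and, as you note, uses only property \ref{point_1}. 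A nice bonus of your formulation is that it makes transparent why the result is sharp: for the centerless even $k$-sun the label cycle is $C_k$ with a closed walk of even length $k$, and for an odd sun without center the label cycle $C_k$ is odd and non-bipartite, so in neither case does the parity obstruction arise---consistent with Lemma~\ref{evnksun} and with the word-representability of odd suns. It would be worth one explicit sentence observing that the center contributes nothing to the walk except the $(k+1)$-st label, which is precisely what makes the cycle even.
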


Note that the graph $M_{\rm V}$ from the class $\mathcal{C}$ (cf. Fig. \ref{fig9}) is isomorphic to the graph $T_8$ depicted in \cite[Fig. 3]{Chen_2022}. Accordingly, the result  \cite[Theorem 19]{Chen_2022} can be stated as per the following.

\begin{lemma}[\cite{Chen_2022}]
	 The graph $M_V$ is non-word-representable.
\end{lemma}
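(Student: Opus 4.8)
The plan is to argue by contradiction using the neighbourhood characterisation of Theorem~\ref{Word_split_graph}. Reading the edges of $M_{\rm V}$ off Fig.~\ref{fig9}, the clique is $C=\{c_1,\dots,c_5\}$ and the independent set is $I=\{a_1,a_2,a_3,a_4\}$, with $N(a_1)=\{c_1,c_2\}$, $N(a_2)=\{c_3,c_4\}$, $N(a_3)=\{c_1,c_4,c_5\}$ and $N(a_4)=\{c_1,c_2,c_3,c_4\}$. Suppose $M_{\rm V}$ were word-representable. Then there is a labelling of $C$ by $1,\dots,5$ for which conditions \ref{point_1}--\ref{point_3} hold, and I would fix one such labelling throughout.

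The first step is to reformulate condition \ref{point_1} geometrically. A proper non-empty subset $S\subseteq[1,k]$ is an interval $[l,r]$ or a co-interval $[1,m]\cup[n,k]$ precisely when $S$ is a set of cyclically consecutive labels on the cycle $1\!-\!2\!-\cdots-\!k\!-\!1$: a non-wrapping arc gives an interval, a wrapping arc gives a co-interval (in particular $\{1,k\}$ is the length-two co-interval $[1,1]\cup[k,k]$). Applying this to $N(a_1),N(a_2),N(a_3)$, condition \ref{point_1} forces $c_1,c_2$ to be cyclically adjacent, $c_3,c_4$ to be cyclically adjacent, and $c_1,c_5,c_4$ to be three cyclically consecutive labels. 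Since there are only five clique vertices, these requirements pin the cyclic order of $c_1,\dots,c_5$ down to $(c_1,c_5,c_4,c_3,c_2)$ up to rotation and reflection: $c_5$ lies between $c_1$ and $c_4$, while $c_2,c_3$ fill the remaining two positions with $c_2$ next to $c_1$ and $c_3$ next to $c_4$. The constraint coming from $N(a_4)$, whose complement is the single vertex $c_5$, is then automatically satisfied.

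The second step isolates the pair $a_3,a_4$. Observe that $N(a_3)\cup N(a_4)=C$ and $N(a_3)\cap N(a_4)=\{c_1,c_4\}$, and that in the cyclic order above $c_1,c_4$ are exactly the two vertices flanking both the complement $\{c_5\}$ of $N(a_4)$ and the complement $\{c_2,c_3\}$ of $N(a_3)$. A linear labelling amounts to cutting the cycle between the labels $k$ and $1$, and this cut falls inside exactly one of the two arcs $N(a_3)$, $N(a_4)$; hence exactly one of them wraps (becoming a co-interval $[1,m]\cup[n,k]$) while the other does not (becoming an interval $[l,r]$). In either case the interval equals the closed label-span $[m,n]$ determined by the co-interval: if $N(a_3)$ wraps its hole is $\{c_2,c_3\}$, so $N(a_4)=\{c_1,c_2,c_3,c_4\}=[m,n]$; if $N(a_4)$ wraps its hole is $\{c_5\}$, so $N(a_3)=\{c_1,c_5,c_4\}=[m,n]$. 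Thus the interval satisfies $l\le m$ and $r\ge n$, which is exactly the negation of condition \ref{point_2} for the pair $a_3,a_4$, a contradiction. Hence no admissible labelling exists and $M_{\rm V}$ is non-word-representable.

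The main obstacle is the bookkeeping in the geometric reformulation: one must check carefully that condition \ref{point_1} really is equivalent to the cyclic-arc condition, including the boundary cases (the two-element co-interval $\{1,k\}$ and the automatically admissible four-element set $N(a_4)$), and that the three arc constraints determine the cyclic order uniquely up to rotation and reflection. If one prefers to avoid the conceptual argument, the same conclusion follows from a finite check: the forced cyclic order leaves only $5\times2$ labellings, which collapse to five cases once one notes that the reflection $x\mapsto k+1-x$ preserves conditions \ref{point_2} and \ref{point_3}, and each of these five cases is seen directly to violate condition \ref{point_2} for $a_3,a_4$.
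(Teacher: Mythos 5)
Your proof is correct. I verified the pieces: the neighbourhoods read off Fig.~\ref{fig9} are right ($N(a_1)=\{c_1,c_2\}$, $N(a_2)=\{c_3,c_4\}$, $N(a_3)=\{c_1,c_4,c_5\}$, $N(a_4)=\{c_1,c_2,c_3,c_4\}$); the cyclic-arc reformulation of Theorem~\ref{Word_split_graph}\ref{point_1} is valid for proper non-empty subsets, which all four neighbourhoods are; the three arc constraints do force the cyclic order $(c_1,c_5,c_4,c_3,c_2)$ (if $c_1$ or $c_4$ were the middle of the arc $\{c_1,c_5,c_4\}$, then $c_2$, respectively $c_3$, could not be cyclically adjacent to it); the interior edges of the arcs $N(a_3)$ and $N(a_4)$ partition the five cycle edges ($2+3=5$), so the cut wraps exactly one of them; and in each case the non-wrapping one carries exactly the labels $[m,n]$ of the span of the wrapping one, violating Theorem~\ref{Word_split_graph}\ref{point_2} since $l=m$ and $r=n$. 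Your fallback five-case check also holds. However, your route is genuinely different from the paper's: the paper gives no proof of this lemma at all — it observes that $M_{\rm V}$ is isomorphic to the graph $T_8$ of \cite{Chen_2022} and simply cites \cite[Theorem 19]{Chen_2022}, where non-word-representability is established by other means (analysis of orientations rather than the split-graph labelling criterion). What is notable is that your argument is exactly in the style the authors themselves use for the neighbouring graphs $M_{\rm II}(k)$, $M_{\rm III}(k)$ and $M_{\rm IV}$: assume a labelling of $C$ as in Theorem~\ref{Word_split_graph} and chase labels to a contradiction, except that your cyclic-order formulation is cleaner and less ad hoc than their linear label-chasing. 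The citation buys the paper brevity; your proof buys uniformity and self-containedness — with it, the classification of word-representable graphs in the family $\mathcal{C}$ (Lemma~\ref{word-rep-famC}) would rest entirely on Theorem~\ref{Word_split_graph} and not on an external case analysis.
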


\begin{lemma}\label{WRF0}
	The graph $F_0$ is word-representable.
\end{lemma}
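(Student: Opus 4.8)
The plan is to invoke the characterization of word-representable split graphs in Theorem \ref{Word_split_graph}: it suffices to exhibit a labeling of the clique vertices of $F_0$ by $1, \ldots, k$ under which every neighborhood $N(a)$, $a \in I$, is either an interval $[l,r]$ or a set of the form $[1,m] \cup [n,k]$, and the compatibility conditions (ii) and (iii) of that theorem hold.

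First I would read off the split structure of $F_0$ from Fig. \ref{fig9}. The vertex $0$ is universal, and together with the four mutually adjacent vertices $c_1, c_2, c_3, c_4$ it forms a maximal clique $C$ of size $k = 5$; the independent set is $I = \{a_1, a_2, a_3\}$ with $N(a_1) = \{0, c_2, c_3\}$, $N(a_2) = \{0, c_3, c_4\}$, and $N(a_3) = \{0, c_1, c_4\}$. Each neighborhood has size three, all three contain the universal vertex $0$, and the pairwise intersections have sizes $|N(a_1) \cap N(a_2)| = |N(a_2) \cap N(a_3)| = 2$ while $|N(a_1) \cap N(a_3)| = 1$; this overlap pattern is exactly that of three consecutive length-three intervals.

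Guided by this, I would place $0$ at the central label and order the remaining clique vertices so that the three neighborhoods become consecutive intervals. Concretely, the labeling $c_2 \mapsto 1$, $c_3 \mapsto 2$, $0 \mapsto 3$, $c_4 \mapsto 4$, $c_1 \mapsto 5$ yields $N(a_1) = [1,3]$, $N(a_2) = [2,4]$, and $N(a_3) = [3,5]$. I would then check condition (i) of Theorem \ref{Word_split_graph}, which holds since every neighborhood is an interval; conditions (ii) and (iii) are then vacuously satisfied, because no neighborhood has the form $[1,m] \cup [n,k]$. This completes the verification and proves that $F_0$ is word-representable.

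The only real work is discovering the right labeling, and the main obstacle is seeing that $F_0$ admits one at all, as opposed to being forced into a neighborhood of the form $[1,m] \cup [n,k]$, which would drag in conditions (ii)--(iii). The clue is the intersection pattern above: choosing $0$ as the median label and distributing $c_2, c_3$ and $c_4, c_1$ to the two sides converts all three neighborhoods simultaneously into intervals, which is the cleanest situation to verify.
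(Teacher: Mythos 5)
Your proof is correct, but it takes a genuinely different route from the paper's. You verify the labeling characterization of Theorem~\ref{Word_split_graph} directly: your reading of $F_0$ from Fig.~\ref{fig9} is accurate (the maximal clique is $C = \{0, c_1, c_2, c_3, c_4\}$, so $k = 5$, and $N(a_1) = \{0, c_2, c_3\}$, $N(a_2) = \{0, c_3, c_4\}$, $N(a_3) = \{0, c_1, c_4\}$), and under your labeling $c_2 \mapsto 1$, $c_3 \mapsto 2$, $0 \mapsto 3$, $c_4 \mapsto 4$, $c_1 \mapsto 5$ one indeed gets $N(a_1) = [1,3]$, $N(a_2) = [2,4]$, $N(a_3) = [3,5]$, so condition \ref{point_1} holds and conditions \ref{point_2}--\ref{point_3} are vacuous because no neighborhood has the form $[1,m] \cup [n,k]$. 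The paper argues differently: it observes that $F_0 = K_1 \vee B_4$ and invokes \cite[Lemma 3]{Hallsorsson_2011}, by which $F_0$ is word-representable if and only if $B_4$ is a comparability graph; that in turn follows from Theorem~\ref{Split_comp_graph}, since $B_4$ is a split graph containing no induced $B_1$, $B_2$, or $B_3$. Each approach buys something: yours is constructive in that the exhibited labeling can be fed straight into Algorithm~\ref{Algo_1} to produce an explicit $3$-uniform word-representant of $F_0$, while the paper's avoids any search for a labeling and, as a by-product, establishes that $B_4$ is a comparability graph --- a fact the paper reuses in the proof of Theorem~\ref{Comp_F_0}, where the permutational $3$-representant of $B_4$ is extended to one of $F_0$ to show $F_0$ is itself a comparability graph.
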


\begin{proof}
	The graph $F_0$ is obtained from the graph $B_4$, given in Fig. \ref{fig7}, by adding an all-adjacent vertex. In view of \cite[Lemma 3]{Hallsorsson_2011}, $F_0$ is a word-representable graph if and only if $B_4$ is a comparability graph. Note that $B_4$ is a split graph and it contains no induced subgraph isomorphic to $B_1, B_2$ or $B_3$ (refer Fig. \ref{fig7}). Thus, by Theorem \ref{Split_comp_graph}, $B_4$ is a comparability graph. Hence, the result follows.  \qed
\end{proof}

\begin{lemma}
	The graphs $M_{\rm III}(3)$ and  $K_1 \vee$ {\rm tent} are non-word-representable graphs.
\end{lemma}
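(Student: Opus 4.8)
The plan is to mirror the argument used in the proof of Lemma~\ref{WRF0}, which hinges on \cite[Lemma~3]{Hallsorsson_2011}: the graph obtained from a graph $H$ by adding an all-adjacent vertex is word-representable if and only if $H$ is a comparability graph. Both $M_{\rm III}(3)$ and $K_1 \vee \text{tent}$ (see Fig.~\ref{fig9}) visibly contain a vertex adjacent to every other vertex, so each is of the form $K_1 \vee H$ for a suitable graph $H$. It therefore suffices to show that in each case the residual graph $H$, obtained by deleting the all-adjacent vertex, is a split graph that fails to be a comparability graph; by Theorem~\ref{Split_comp_graph} this amounts to exhibiting an induced copy of $B_1$, $B_2$, or $B_3$ (Fig.~\ref{fig7}) inside $H$.

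First I would treat $K_1 \vee \text{tent}$. Deleting the all-adjacent (central) vertex leaves exactly the tent, and reading off its adjacencies shows that the tent is, up to relabelling, the graph $B_2$ of Fig.~\ref{fig7}: it is the split graph whose clique is a triangle and whose three independent vertices are each attached to two consecutive triangle vertices. Since $B_2$ is itself a split graph that trivially contains $B_2$ as an induced subgraph, Theorem~\ref{Split_comp_graph} shows $B_2$ is not a comparability graph, and hence $K_1 \vee \text{tent}$ is non-word-representable.

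Next I would treat $M_{\rm III}(3)$. Here too one vertex is adjacent to all others; deleting it leaves a graph $H$ on six vertices consisting of a triangle together with a pendant vertex attached to each triangle vertex --- that is, $H$ is the net. A direct comparison of adjacencies identifies the net with the graph $B_1$ of Fig.~\ref{fig7}. As $B_1$ is a split graph containing an induced $B_1$, Theorem~\ref{Split_comp_graph} again gives that it is not a comparability graph, whence $M_{\rm III}(3)$ is non-word-representable.

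The routine content of the argument is the identification of each residual graph with one of the forbidden subgraphs $B_1$, $B_2$, $B_3$; the only point requiring care is verifying, from the depictions in Fig.~\ref{fig9}, that the named vertex really is adjacent to every other vertex and that the remaining adjacencies match $B_2$ (respectively $B_1$) up to isomorphism. Once these bijections are pinned down, the conclusion is immediate from \cite[Lemma~3]{Hallsorsson_2011} together with Theorem~\ref{Split_comp_graph}.
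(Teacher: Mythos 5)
Your proposal is correct and takes essentially the same route as the paper: both recognize $M_{\rm III}(3)$ and $K_1 \vee$ tent as $B_1$ and $B_2$, respectively, with an all-adjacent vertex added, and conclude non-word-representability from \cite[Lemma 3]{Hallsorsson_2011} (equivalently, that the neighborhood of every vertex in a word-representable graph induces a comparability graph). Your only addition is to justify explicitly, via Theorem \ref{Split_comp_graph}, that $B_1$ and $B_2$ are not comparability graphs, a fact the paper simply asserts.
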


\begin{proof}
    Note that the graphs $M_{\rm III}(3)$ and $K_1 \vee$ {\rm tent} are obtained from the graphs $B_1$ and $B_2$ (see Fig. \ref{fig7}), respectively, by adding an all-adjacent vertex. In a word-representable graph, the neighborhood of every vertex induces a comparability graph (see \cite[Theorem 9]{MR2467435} or \cite[Lemma 3]{Hallsorsson_2011}). However, since none of $B_1$ and $B_2$ is a comparability graph, $M_{\rm III}(3)$ and $K_1 \vee$ {\rm tent} are not word-representable graphs.  \qed	
\end{proof}

For an even integer $k \ge 4$, the graph $M_{\rm II}(k)$ (given in Fig. \ref{fig9}) is the split graph $(I \cup C, E)$ with $C = \{c_1, \ldots, c_k\}$ and $I = \{b_1, b_2, a_1, \ldots, a_{k-2}\}$ such that $N(b_1) = \{c_1, \ldots, c_{k-2}, c_k\}$, $N(b_2) = \{c_2, \ldots, c_k\}$ and, for $1 \le i \le k-2$, $N(a_i) = \{c_i, c_{i+1}\}$. 

\begin{lemma}
	For $k \ge 4$, the graph $M_{\rm II}(k)$ is non-word-representable.
\end{lemma}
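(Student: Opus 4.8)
The plan is to argue by contradiction using the characterization in Theorem~\ref{Word_split_graph}. Suppose $M_{\rm II}(k)$ is word-representable and fix a labeling of $C=\{c_1,\dots,c_k\}$ by $1,\dots,k$, writing $\sigma(c_i)$ for the label of $c_i$, satisfying conditions \ref{point_1}--\ref{point_3}. The two kinds of vertices of $I$ play very different roles: each $a_i$ has the two-element neighbourhood $\{c_i,c_{i+1}\}$, whereas $b_1,b_2$ have the large neighbourhoods $C\setminus\{c_{k-1}\}$ and $C\setminus\{c_1\}$. I would first observe that the neighbourhood of $b_1$ (resp.\ $b_2$), being $[1,k]$ minus the single label $y:=\sigma(c_{k-1})$ (resp.\ $x:=\sigma(c_1)$), can always be put in the form required by \ref{point_1}: it is an interval if the omitted label is $1$ or $k$, and of the form $[1,m]\cup[n,k]$ otherwise. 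Hence the binding constraints will come from \ref{point_2} and \ref{point_3}, while the $a_i$ serve only to pin down the labeling.

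The first key step is a rigidity argument for the $a_i$. A two-element subset of $[1,k]$ has one of the forms in \ref{point_1} only if it is a pair of consecutive labels $\{t,t+1\}$ or the pair $\{1,k\}$; moreover $\{1,k\}$ can occur as $N(a_i)$ for at most one $i$, since two distinct edges of a path cannot share both endpoints. As the sets $\{c_i,c_{i+1}\}$, $1\le i\le k-2$, trace the path $c_1c_2\cdots c_{k-1}$, the sequence $\sigma(c_1),\dots,\sigma(c_{k-1})$ takes steps $\pm 1$ except for at most one step realizing $\{1,k\}$. Since a $\pm 1$ walk through distinct integers is strictly monotone, this yields exactly two cases: \textbf{(A)} no $\{1,k\}$ step, so $\sigma(c_1),\dots,\sigma(c_{k-1})$ is a single monotone run filling $[1,k-1]$ or $[2,k]$ and $\sigma(c_k)$ is the remaining label; and \textbf{(B)} one $\{1,k\}$ step at an edge $c_jc_{j+1}$, splitting the path into two monotone runs. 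In case (A) exactly one of $x,y$ is a global endpoint ($1$ or $k$) and the other is interior. In case (B) a short computation of the two runs' intervals shows $|x-y|=2$, with $\sigma(c_k)$ the unique label strictly between $x$ and $y$.

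The contradiction then comes from $b_1,b_2$. Exploiting the automorphism $c_i\mapsto c_{k-i}$ ($1\le i\le k-1$), $c_k\mapsto c_k$, which reverses the path and swaps $b_1\leftrightarrow b_2$, together with the label-reversal symmetry $t\mapsto k+1-t$ of conditions \ref{point_1}--\ref{point_3}, I would reduce the number of cases. In case (A), one of $b_1,b_2$ then has an interval neighbourhood reaching an end of $[1,k]$ and the other a neighbourhood $[1,m]\cup[n,k]$; checking \ref{point_2} for this pair shows (using $k\ge 4$) that both alternatives $l>m$ and $r<n$ fail, a contradiction. In case (B), when $x,y$ are both interior, both $b_1$ and $b_2$ have neighbourhoods of the form $[1,m]\cup[n,k]$, and because $|x-y|=2$ these wrap-around neighbourhoods ``just touch'': the parameters satisfy $m'=n$ or $n'=m$, so one of the strict inequalities in \ref{point_3} fails. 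For the remaining values of $j$, one of $x,y$ equals $1$ or $k$, again giving an interval/wrap-around pair that violates \ref{point_2}. Thus no valid labeling exists, and $M_{\rm II}(k)$ is non-word-representable. (Note the argument needs only $k\ge4$, not the parity stated for the circle-graph family.)

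The main obstacle I anticipate is the rigidity step together with the ensuing bookkeeping: one must argue carefully that the $a_i$ force $\sigma(c_1),\dots,\sigma(c_{k-1})$ into monotone runs, correctly locate the missing label $\sigma(c_k)$ and the values $x,y$ in each case, and then verify that for every case (including the boundary values of $j$) the pair $b_1,b_2$ violates \ref{point_2} or \ref{point_3}. Organizing these sub-cases efficiently through the two symmetries is what keeps the proof short; the arithmetic within each case is routine.
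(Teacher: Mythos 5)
Your proof is correct --- the rigidity claim, the $|x-y|=2$ computation in case (B), and the inequality checks against conditions \ref{point_2} and \ref{point_3} all hold for $k\ge 4$ --- but it runs in the opposite direction to the paper's proof, so the two are worth comparing. The paper cases on the membership of $b_1,b_2$ in $A$ or $B$, uses conditions \ref{point_2} and \ref{point_3} of Theorem~\ref{Word_split_graph} to pin down the labels of $c_1$ and $c_{k-1}$ (the unique vertices omitted by $N(b_2)$ and $N(b_1)$), and then propagates labels along the path $c_1c_2\cdots c_{k-1}$ edge by edge until some $a_i$ acquires a neighbourhood such as $\{2,k\}$ or $\{k-2,k\}$, contradicting \ref{point_1}. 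You invert this: you establish the path rigidity once and for all (the labels along $c_1,\dots,c_{k-1}$ form a single monotone run, or two monotone runs joined by one $\{1,k\}$ step) and then locate the contradiction at $b_1,b_2$ via \ref{point_2} or \ref{point_3}. The two arguments are dual: in the paper's Case 1 (both $b$'s in $A$), condition \ref{point_3} forces the omitted labels $x=\sigma(c_1)$ and $y=\sigma(c_{k-1})$ to differ by exactly one, and the ensuing label chase contradicts \ref{point_1} at some $a_i$; your rigidity lemma yields $|x-y|=2$ outright (with $\sigma(c_k)$ strictly between) and contradicts \ref{point_3} directly, via $m'=n$ or $n'=m$. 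Your ordering buys some economy: the monotone-run lemma is proved once and reused in every case; the paper's Case 2 (both $b$'s in $B$, which requires $\{x,y\}=\{1,k\}$) is ruled out immediately, since rigidity makes one of $x,y$ interior in your case (A) and gives $|x-y|=2$ in case (B), forcing $k=3$; and the two symmetries you identify cut the residual bookkeeping, whereas the paper repeats the propagation argument in each of its three cases. The paper's ordering buys self-contained directness, with no preliminary lemma. The only points needing explicit care in a full write-up of your version are the ones you already flagged: in case (B), that the run ending at label $1$ (respectively beginning at label $k$) is forced to be decreasing, which is exactly what produces $|x-y|=2$; and that the boundary subcases ($x$ or $y$ in $\{1,k\}$) exhaust what remains, so that every labeling falls to \ref{point_2} or \ref{point_3}.
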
 

\begin{proof}
	On the contrary, suppose $M_{\rm II}(k)$ is a word-representable graph. Then, in view of Theorem \ref{Word_split_graph}, the vertices of $C$ can be labeled from $1$ to $k = |C|$ such that, for $a \in I$, either $a \in A$, i.e., $N(a) = [1, m] \cup [n, k]$ (with $m < n$) or $a \in B$, i.e., $N(a) = [l, r]$ (with $l \le r$). We derive a contradiction in each of the following cases, by considering the possibilities of $b_1, b_2$ belonging to the sets $A, B$. 	
	\begin{itemize}
		\item Case 1: $b_1, b_2 \in A$. Note that $\deg(b_1) = k-1 = \deg(b_2)$ and $N(b_1) \neq N(b_2)$. In view of Theorem \ref{Word_split_graph} \ref{point_3}, without loss of generality, for some $m \ge 1$ with $m +2 < k$, let $N(b_1) = [1, m] \cup [m+2, k]$  and $N(b_2) = [1, m+1] \cup [m+3, k]$. Accordingly, the vertices $c_1$ and $c_{k-1}$ are labeled as $m+2$ and $m+1$, respectively. Further, for each $1 \le i \le k-2$, since the degree of $a_i$ is two, we have $N(a_i) = [j, j+1]$, for some $1 \le j < k$, or $N(a_i) = [1, 1] \cup [k, k]$. Note that $N(a_1) = \{c_1, c_2\}$ and $c_1$ is labeled as $m+2$ $(< k)$. Since $m+1$ is already used as the label of $c_{k-1}$, the vertex $c_2$ should be labeled as $m+3$. Continuing with this argument, the vertices $c_3, \ldots, c_{k-m-1}$ should be labeled as $m + 4, \ldots, k$, respectively. On the other hand, since $N(a_{k-2}) = \{c_{k-2}, c_{k-1}\}$ and $m+2$ is used as the label of $c_{1}$,  the vertex $c_{k-2}$ should be labeled as $m$. Again continuing with this argument, the vertices $c_{k-3}, \ldots, c_{k-m}$ are labeled as $m-1, \ldots, 2$, respectively. This shows that $N(a_{k-m-1}) = \{c_{k-m-1}, c_{k-m}\} = \{k, 2\}$, a contradiction to Theorem \ref{Word_split_graph}\ref{point_1}, as $k \ge 4$.  
		
		\item Case 2: $b_1, b_2 \in B$. Since $\deg(b_1) = \deg(b_2) = k-1$, and $N(b_1) \neq N(b_2)$, one of $N(b_1)$ and $N(b_2)$ is $[2, k]$, and the other one is $[1, k-1]$. Without loss of generality, suppose that $N(b_1) = [2, k]$ and $N(b_2) = [1, k-1]$. Thus, $c_1$ and $c_{k-1}$ are labeled as $k$ and $1$, respectively. Further, for each $1 \le i \le k-2$, since the degree of $a_i$ is two, we have $N(a_i) = [j, j+1]$, for some $1 \le j < k$, or $N(a_i) = [1, 1] \cup [k, k]$. Since $N(a_{k-2}) = \{c_{k-2}, c_{k-1}\}$, the vertex $c_{k-2}$ should be labeled as $2$. Continuing the same argument, $3, \ldots, k-2$ must be the labels of the vertices $c_{k-3}, \ldots, c_2$, respectively. Thus, we have $N(a_1) = \{c_1, c_2\} = \{k, k-2\}$, a contradiction to Theorem \ref{Word_split_graph}\ref{point_1}, as $k \ge 4$.
		
		\item Case 3: $b_1 \in A$ and $b_2 \in B$. Since $\deg(b_2) = k-1$, we have $N(b_2) = [1, k-1]$ or $[2, k]$. Suppose that $N(b_2) = [1, k-1]$. Since $\deg(b_1) = k-1$, in view of Theorem \ref{Word_split_graph}\ref{point_2}, we have $N(b_1) = [1, k-2] \cup [k, k]$. Thus, $k$ and $k-1$ must be the labels of $c_1$ and $c_{k-1}$, respectively. Since $N(a_{k-2}) = \{c_{k-2}, c_{k-1}\}$, $c_{k-2}$ should be labeled as $k-2$. Note that as $N(a_i) = \{c_i, c_{i+1}\}$, for each $2 \le i \le k-3$, continuing the same argument, the vertices $c_{k-3}, \ldots, c_2$ must receive the labels $k-3, \ldots, 2$, respectively. Thus, $N(a_1) = \{c_1, c_2\} = \{k, 2\}$, a contradiction to Theorem \ref{Word_split_graph}\ref{point_1}. Similarly, if $N(b_2) = [2, k]$, we will arrive at a contradiction.
	\end{itemize}
Hence, the graph $M_{\rm II}(k)$ is non-word-representable. \qed

\end{proof}

For an even integer $k \ge 4$, the graph $M_{\rm III}(k)$ (depicted in Fig. \ref{fig9}) is the split graph $(I \cup C, E)$ with $C = \{c_1, \ldots, c_{k+1}\}$ and $I = \{b, a_1, \ldots, a_{k-1}\}$ such that $N(b) = \{c_2, \ldots, c_{k-1}, c_{k+1}\}$ and, for $1 \le i \le k-1$, $N(a_i) = \{c_i, c_{i+1}\}$.

\begin{lemma}
	For $k \ge 4$, the graph $M_{\rm III}(k)$ is non-word-representable.
\end{lemma}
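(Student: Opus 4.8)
The plan is to show that $M_{\rm III}(k)$ is non-word-representable by deriving a contradiction from Theorem~\ref{Word_split_graph}, following the same structural template used in the proof for $M_{\rm II}(k)$. First I would suppose, for contradiction, that $M_{\rm III}(k)$ is word-representable. Then the $k+1$ vertices of $C = \{c_1, \ldots, c_{k+1}\}$ admit a labeling from $1$ to $k+1$ satisfying properties \ref{point_1}--\ref{point_3}, and every vertex of $I$ falls into either $A$ (neighborhood of the form $[1,m] \cup [n, k+1]$) or $B$ (neighborhood of the form $[l,r]$). The key vertex to analyze is $b$, whose degree is $k-1$, so its non-neighbors among $C$ are exactly two vertices, namely $c_1$ and $c_k$. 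I would split into the two cases $b \in A$ and $b \in B$.

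\emph{If $b \in B$}, then $N(b) = [l, r]$ is an interval of size $k-1$; since $|C| = k+1$, the complement $C \setminus N(b)$ is a set of two labels, and an interval of size $k-1$ inside $[1, k+1]$ omits two labels only when they are the two endpoints $\{1, k+1\}$ (giving $N(b) = [2, k]$) or two consecutive labels at one end. But $b$'s actual non-neighbors $c_1, c_k$ are \emph{not} adjacent to each other being part of the clique they are adjacent, so this constrains which labels they can receive. I would pin down that $c_1, c_k$ must carry the two omitted labels, then propagate the forced labeling through the chain of degree-two vertices $a_1, \ldots, a_{k-1}$ with $N(a_i) = \{c_i, c_{i+1}\}$. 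Exactly as in the $M_{\rm II}(k)$ argument, each $a_i$ forces $c_i$ and $c_{i+1}$ to receive consecutive labels (since $N(a_i)$ must be $[j, j+1]$ or $[1,1]\cup[k+1,k+1]$), and chasing the chain from both ends will force some $N(a_i)$ to equal $\{$top label$, $a non-extreme label$\}$, violating property~\ref{point_1}.

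\emph{If $b \in A$}, then $N(b) = [1, m] \cup [n, k+1]$ with the two non-neighbors being exactly $\{m+1, \ldots, n-1\}$; since $b$ has exactly two non-neighbors, $n = m+3$ so the omitted labels are $m+1, m+2$, and these must be the labels of $c_1$ and $c_k$. Again I would propagate the forced labeling along the path of $a_i$'s. The main obstacle — and the step requiring genuine care rather than routine imitation — is that $C$ now has $k+1$ vertices rather than $k$, and the removed vertex $b$ is attached to a \emph{non-extreme} pair $\{c_1, c_k\}$ of the underlying $(k+1)$-cycle-like structure rather than to a boundary pair; so I must verify that the two forced labels genuinely land on $c_1$ and $c_k$ and that the chain-chasing closes up into a contradiction with property~\ref{point_1} (or, where two $A$-type vertices coexist, with property~\ref{point_3}). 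Once the labeling is forced all the way around, some $a_i$ will necessarily have $N(a_i) = \{c_i, c_{i+1}\}$ equal to a pair of labels that is neither an interval $[j,j+1]$ nor the extreme pair $\{1, k+1\}$, contradicting \ref{point_1} and completing the proof. \qed
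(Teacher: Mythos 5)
Your proposal follows the paper's proof essentially verbatim: the paper also splits on $b \in A$ (forcing $N(b) = [1,m] \cup [m+3, k+1]$, so $c_1$ and $c_k$ receive the labels $m+1$ and $m+2$) versus $b \in B$ (forcing $N(b) \in \{[1,k-1], [2,k], [3,k+1]\}$), and then chases the forced consecutive labels along the chain $a_1, \ldots, a_{k-1}$ from both ends until some $N(a_i)$ becomes a pair such as $\{k+1, 2\}$, contradicting Theorem~\ref{Word_split_graph}\ref{point_1}. The label-chasing you defer closes up exactly as you predict (the extra clique vertex $c_{k+1}$ causes no trouble since it lies in no $N(a_i)$, and the graph's symmetry swapping $c_i \leftrightarrow c_{k+1-i}$, which fixes $c_{k+1}$, justifies the without-loss-of-generality choices), so your sketch is correct and takes the same route as the paper.
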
 

\begin{proof}	
	On the contrary, suppose $M_{\rm III}(k)$ is a word-representable graph. Then, in view of Theorem \ref{Word_split_graph}, the vertices of $C$ can be labeled from $1$ to $k+1 = |C|$ such that, for $a \in I$, either $a \in A$, i.e., $N(a) = [1, m] \cup [n, k+1]$ (with $m < n$) or $a \in B$, i.e., $N(a) = [l, r]$ (with $l \le r$). We derive a contradiction in each of the following cases, by considering the possibilities of $b$ belonging to the sets $A, B$.  	
	\begin{itemize}
		\item Case 1: $b \in A$. Since $\deg(b) = k-1$, we have $N(b) = [1, m] \cup [m+3, k+1]$. Thus, one of $c_1$ and $c_k$ is labeled as $m+1$, and the other one is labeled as $m+2$. In view of the symmetry, suppose $c_1$ and $c_k$ are labeled as $m+2$ and $m+1$, respectively. Since $N(a_1) = \{c_1, c_2\}$ and $m+1$ is already used as the label of $c_k$, the vertex $c_2$ should be labeled as $m+3$. Continuing the same argument, we conclude that the vertices $c_3, \ldots, c_{k-m}$ are labeled as $m+4, \ldots, k+1$, respectively. On the other hand, as $N(a_{k-1}) = \{c_{k-1}, c_k\}$, the vertex $c_{k-1}$ should be labeled as $m$. Accordingly, the vertices $c_{k-2}, \ldots, c_{k-m+1}$ are labeled as $m-1, \ldots, 2$, respectively. Thus, $N(a_{k-m}) = \{c_{k-m}, c_{k-m+1}\} = \{k+1, 2\}$, a contradiction to Theorem \ref{Word_split_graph}\ref{point_1}. 
		
		\item Case 2: $b \in B$. Since $\deg(b) = k-1$, we have $N(b) = [1, k-1]$ or $[2, k]$, or $[3, k+1]$. Suppose $N(b) = [1, k-1]$. Thus, one of $c_1$ and $c_k$ is labeled as $k$, and the other one is labeled as $k+1$. Without loss of generality, suppose $c_1$ and $c_k$ are labeled as $k$ and $k+1$, respectively. Since $N(a_1) = \{c_1, c_2\}$, the vertex $c_2$ should be labeled as $k-1$. Accordingly, the vertices $c_3, \ldots, c_{k-1}$ are labeled as $k-2, \ldots, 2$, respectively. Thus, $N(a_{k-1}) = \{c_{k-1}, c_k\} = \{2, k+1\}$, a contradiction to Theorem \ref{Word_split_graph}\ref{point_1}. Similarly, if $N(b) = [2, k]$ or $[3, k+1]$, we will arrive at a contradiction.
	\end{itemize}
Hence, there is no labelling of the vertices of $C$ which satisfies Theorem \ref{Word_split_graph} so that the graph $M_{\rm III}(k)$ is non-word-representable. \qed
\end{proof}

The graph $M_{\rm IV}$ is the split graph $(I \cup C, E)$ with $C = \{c_1, c_2, c_3, c_4, c_5, c_6\}$ and $I = \{a_1, a_2, a_3, a_4\}$ such that $N(a_1) = \{c_1, c_2\}, N(a_2) = \{c_3, c_4\}, N(a_3) = \{c_5, c_6\}$, and $N(a_4) = \{c_2, c_4, c_6\}$.

\begin{lemma}
	The graph $M_{\rm IV}$ is a non-word-representable graph.
\end{lemma}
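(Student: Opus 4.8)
The plan is to argue by contradiction using the labelling criterion of Theorem~\ref{Word_split_graph}. Assume $M_{\rm IV}$ is word-representable; then the clique $C$ admits a labelling of $c_1,\dots,c_6$ by $1,\dots,6$ for which each of $N(a_1),\dots,N(a_4)$ is either an interval $[l,r]$ or a co-interval $[1,m]\cup[n,6]$ with $m<n$. First I would classify the admissible shapes purely from cardinality. A two-element set is an interval exactly when its labels are consecutive, and a co-interval exactly when it equals $\{1,6\}$, since $m+(6-n+1)=2$ forces $m=1$ and $n=6$. A three-element set is an interval exactly when its labels are three consecutive integers, and a co-interval exactly when it equals $\{1,5,6\}$ or $\{1,2,6\}$, since $m+(6-n+1)=3$ forces $(m,n)\in\{(1,5),(2,6)\}$.

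Next I would use the rigid skeleton of $M_{\rm IV}$. The degree-two vertices $a_1,a_2,a_3$ have neighbourhoods $\{c_1,c_2\}$, $\{c_3,c_4\}$, $\{c_5,c_6\}$, which partition $C$ into three pairs, while $a_4$ is adjacent to exactly one vertex of each pair. Hence, under any admissible labelling, the three label-pairs form a partition of $\{1,\dots,6\}$ in which each block is a consecutive pair or $\{1,6\}$. A brief enumeration shows there are only two such partitions: if $\{1,6\}$ is not a block then $1$ must be paired with $2$ and $6$ with $5$, forcing $\{\{1,2\},\{3,4\},\{5,6\}\}$; otherwise the remaining labels $\{2,3,4,5\}$ split uniquely into $\{2,3\}$ and $\{4,5\}$, giving $\{\{1,6\},\{2,3\},\{4,5\}\}$.

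Finally I would analyse $a_4$. Since $a_4$ meets each pair in one vertex and the two labels within a pair may be assigned freely, the set $N(a_4)$ is an arbitrary transversal choosing one label from each block. For the first partition such a transversal has a label in $\{1,2\}$, one in $\{3,4\}$, and one in $\{5,6\}$; its span is at least $3$ (so it is not three consecutive labels) and it contains a label from $\{3,4\}$ (so it equals neither $\{1,5,6\}$ nor $\{1,2,6\}$). For the second partition a transversal has a label in $\{1,6\}$, one in $\{2,3\}$, and one in $\{4,5\}$; again its span is at least $3$, and it contains a label from $\{2,3\}$ (excluding $\{1,5,6\}$) and one from $\{4,5\}$ (excluding $\{1,2,6\}$). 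In every case $N(a_4)$ is neither an interval nor a co-interval, contradicting Theorem~\ref{Word_split_graph}\ref{point_1}; hence $M_{\rm IV}$ is non-word-representable.

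The step I expect to demand the most care is this last one: checking exhaustively that, for each of the two admissible pair-partitions, \emph{every} transversal available to $a_4$ simultaneously violates all three permissible three-element shapes. By contrast, classifying the interval and co-interval shapes from the cardinality equation $m+(6-n+1)\in\{2,3\}$ and enumerating the two pair-partitions are routine once these constraints are written down.
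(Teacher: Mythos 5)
Your proof is correct and follows essentially the same route as the paper's: both assume word-representability, invoke the labelling criterion of Theorem~\ref{Word_split_graph}, classify the admissible two- and three-element neighbourhood shapes forced by \ref{point_1}, and reach a contradiction for $N(a_4)$ by a finite case check. The only difference is organizational: the paper cases directly on the possible shapes of $N(a_4)$ (first ruling out $a_4 \in B$, then the two co-interval options $[1,1]\cup[5,6]$ and $[1,2]\cup[6,6]$), whereas you first enumerate the two admissible pair-partitions induced by $a_1, a_2, a_3$ and then exclude every transversal --- slightly more systematic bookkeeping, but the same argument.
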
 

\begin{proof}
	 On the contrary, suppose $M_{\rm IV}$ is word-representable. Then, in view of Theorem \ref{Word_split_graph}, the vertices of $C$ can be labeled from $1$ to $6 = |C|$ such that, for $a \in I$, either $a \in A$, i.e., $N(a) = [1, m] \cup [n, 6]$ (with $m < n$) or $a \in B$, i.e., $N(a) = [l, r]$ (with $l \le r$).  	
	
	For $1 \le i \le 3$, since degree of $a_i$ is two, we have $N(a_i) = \{1, 6\}$, or $\{j, j+1\}$, for some $1 \le j \le 5$. Further, since the degree of $a_4$ is three, and $N(a_4)$ contains exactly one vertex from each of $N(a_i)$, for $1 \le i \le 3$, and $N(a_i) \cap N(a_j) = \varnothing$ $(1 \le i \neq j \le 3)$, we see that $N(a_4)$ cannot be consecutive, i.e., $a_4 \notin B$. We now show that $a_4 \notin A$, as well; thus, we arrive at a contradiction. 
	
    Suppose $a_4 \in A$. Then the only possibilities are $N(a_4) = [1, 1] \cup [5, 6]$ or $[1, 2] \cup [6, 6]$. Suppose that $N(a_4) = [1, 1] \cup [5, 6]$. As $N(a_4)$ contains exactly one vertex from each of $N(a_i)$, for $1 \le i \le 3$, without loss of generality, suppose that $1 \in N(a_1), 5 \in N(a_2), 6 \in N(a_3)$. For $1 \le i \neq j \le 3$, since $N(a_i) \cap N(a_j) = \varnothing$, we must have $N(a_1) = \{1, 2\}$ and $N(a_2) = \{4, 5\}$. This implies that $N(a_3) = \{3, 6\}$; a contradiction to Theorem \ref{Word_split_graph}\ref{point_1}. Similarly, it can be shown that $N(a_4)$ cannot be $[1, 2] \cup [6, 6]$. \qed
\end{proof}

For an odd integer $k \ge 5$, the graph $F_1(k)$ (given in Fig. \ref{fig9}) is the split graph $(I \cup C,  E)$ with $I = \{b_1, b_2, a_1, \ldots, a_{k-2}\}$ and $C = \{c_1, \ldots, c_{k-1}\}$ such that $N(b_1) = \{c_1, \ldots, c_{k-2}\}$, $N(b_2) = \{c_2, \ldots, c_{k-1}\}$ and, for $1 \le i \le k-2$, $N(a_i) = \{c_i, c_{i+1}\}$.

\begin{lemma}
	For $k \ge 5$, the graph $F_1(k)$ is word-representable.
\end{lemma}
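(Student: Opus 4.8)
The plan is to invoke the characterization of word-representable split graphs in Theorem~\ref{Word_split_graph}: it suffices to exhibit a labelling of the clique vertices of $F_1(k)$ by the integers $1, \ldots, |C|$ under which the neighbourhood of every vertex of $I$ takes one of the two admissible shapes, and for which conditions~\ref{point_1}--\ref{point_3} hold. The first thing to keep in mind is the index range: here $C = \{c_1, \ldots, c_{k-1}\}$, so $|C| = k-1$ and the labels run from $1$ to $k-1$, not to $k$.

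First I would try the most natural labelling suggested by the path-like indexing of the construction, namely $c_i \mapsto i$ for each $1 \le i \le k-1$. Under this labelling every small neighbourhood becomes a pair of consecutive labels, $N(a_i) = \{c_i, c_{i+1}\} = [i, i+1]$, while the two large neighbourhoods become $N(b_1) = [1, k-2]$ and $N(b_2) = [2, k-1]$. In particular every vertex of $I$ has an interval neighbourhood of the form $[l, r]$, so in the notation fixed before the statement we have $A = \varnothing$ and $I = B$.

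It then remains only to verify that this labelling meets Theorem~\ref{Word_split_graph}. Condition~\ref{point_1} holds for every vertex of $I$ because each of the neighbourhoods above is an interval $[l,r]$. Conditions~\ref{point_2} and~\ref{point_3} are both hypotheses conditioned on the existence of a vertex whose neighbourhood has the split form $[1, m] \cup [n, k-1]$ with $m < n$; since $A = \varnothing$, no such vertex exists and both conditions are vacuously satisfied. Hence the labelling $c_i \mapsto i$ witnesses word-representability, proving the lemma. As a bonus, feeding this labelling into Algorithm~\ref{Algo_1} yields an explicit $3$-uniform representant, so in fact $\mathcal{R}(F_1(k)) \le 3$.

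There is essentially no serious obstacle in this argument: the entire content is the observation that the natural indexing already linearises all the two-element neighbourhoods $\{c_i, c_{i+1}\}$ into consecutive pairs while leaving $N(b_1)$ and $N(b_2)$ as end-anchored intervals, so no vertex is ever forced into the split form that would activate conditions~\ref{point_2} or~\ref{point_3}. The only points requiring care are the bookkeeping of the reduced index range $|C| = k-1$ and the sanity check that $[1, k-2]$ and $[2, k-1]$ are genuine subintervals of $[1, k-1]$ (which needs only $k \ge 3$, comfortably met since $k \ge 5$).
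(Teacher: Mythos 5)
Your proposal is correct and is essentially identical to the paper's proof: the paper likewise labels $c_i$ as $i$ for $1 \le i \le k-1$, observes that $N(b_1) = [1, k-2]$, $N(b_2) = [2, k-1]$ and $N(a_i) = [i, i+1]$ are all intervals, and concludes via Theorem~\ref{Word_split_graph}. Your explicit remarks that $A = \varnothing$ makes conditions~\ref{point_2} and~\ref{point_3} vacuous, and that the labels run only to $|C| = k-1$, merely spell out what the paper leaves implicit.
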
 

\begin{proof}
	  For each $1 \le i \le k-1$, we label $c_i$ as $i$. Observe that under this labelling of the vertices of $C$, we have $N(b_1) = [1, k-2], N(b_2) = [2, k-1]$ and, for each $1 \le i \le k-2$, $N(a_i) = [i, i+1]$. Hence, in view of Theorem \ref{Word_split_graph}, the graph $F_1(k)$ is word-representable. \qed
\end{proof}

For an odd integer $k \ge 5$, the graph $F_2(k)$ (see Fig. \ref{fig9}) is the split graph $(I \cup C,  E)$ with $I = \{b,  a_1, \ldots, a_{k-1}\}$ and $C = \{c_1, \ldots, c_{k}\}$ such that $N(b) = \{c_2, \ldots, c_{k-1}\}$ and, for $1 \le i \le k-1$, $N(a_i) = \{c_i, c_{i+1}\}$.

\begin{lemma}\label{f2k}
	For  $k \ge 5$, the graph $F_2(k)$ is word-representable.
\end{lemma}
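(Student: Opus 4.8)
The plan is to mirror the argument just used for $F_1(k)$: exhibit an explicit labelling of the clique vertices $C$ and verify that it satisfies the three conditions of Theorem \ref{Word_split_graph}. First I would adopt the natural labelling, assigning $c_i$ the label $i$ for each $1 \le i \le k$. Under this labelling, the neighbourhood of $b$ becomes $N(b) = [2, k-1]$, which is a single interval (valid since $k \ge 5$ ensures $2 \le k-1$), and for each $1 \le i \le k-1$ the neighbourhood of $a_i$ becomes $N(a_i) = [i, i+1]$, again a single interval.

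The key observation is that \emph{every} vertex of $I$ has an interval neighbourhood of the form $[l, r]$; no vertex attains the split form $[1, m] \cup [n, k]$. In the notation used to set up Algorithm \ref{Algo_1}, this means the set $A$ of split-type neighbourhoods is empty and $I = B$. Consequently, condition \ref{point_1} of Theorem \ref{Word_split_graph} holds immediately for each vertex, while conditions \ref{point_2} and \ref{point_3}, each of whose hypothesis requires some vertex to lie in $A$, are satisfied vacuously. Therefore the identity labelling meets every requirement of Theorem \ref{Word_split_graph}, and $F_2(k)$ is word-representable.

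There is no substantial obstacle here: exactly as with $F_1(k)$, the whole argument reduces to reading off the neighbourhoods under the identity labelling and noting that the absence of any split-type neighbourhood trivialises the two interaction conditions. The only point meriting a moment's care is confirming that $N(b) = [2, k-1]$ is a genuine (nonempty) interval, which the hypothesis $k \ge 5$ guarantees.
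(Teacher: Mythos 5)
Your proof is correct and follows essentially the same route as the paper: both assign the identity labelling $c_i \mapsto i$, observe that $N(b) = [2, k-1]$ and each $N(a_i) = [i, i+1]$ are intervals, and conclude via Theorem \ref{Word_split_graph}. Your added remark that $A = \varnothing$ renders conditions \ref{point_2} and \ref{point_3} vacuous merely makes explicit what the paper's terse verification leaves implicit.
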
 

\begin{proof}
     For each $1 \le i \le k$, we label $c_i$ as $i$. Observe that under this labelling of the vertices of $C$, we have $N(b) = [2, k-1]$ and, for each $1 \le i \le k-1$, $N(a_i) = [i, i+1]$. Hence, in view of Theorem \ref{Word_split_graph}, the graph $F_2(k)$ is word-representable. \qed
\end{proof}

Let $\mathcal{C}_3$ be the subclass of $\mathcal{C}$ (given in Fig. \ref{fig9}) consisting of the following graphs:
	\begin{enumerate}[label=\rm (\roman*)]
	\item $F_0$ 
	\item For even $k \ge 4$, even-$k$-sun 
	\item For odd $k \ge 5$, $F_1(k)$ and $F_2(k)$
\end{enumerate}
In views of lemmas \ref{evnksun}--\ref{f2k}, we have the following result.

\begin{lemma}\label{word-rep-famC}
	The class of word-representable graphs in $\mathcal{C}$ is $\mathcal{C}_3$. 
\end{lemma}

\begin{theorem}\label{Rep_no_split}
	Let $G$ be a word-representable split graph. Then, $\mathcal{R}(G) = 3$ if and only if $G$ contains at least one graph from $\mathcal{C}_3$ as an induced subgraph. 
\end{theorem}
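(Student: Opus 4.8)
The plan is to reduce the statement entirely to the already-known dichotomy between representation number $\le 2$ and representation number $3$, and then to translate the circle-graph condition into a forbidden-subgraph condition using the results collected in Section \ref{split_intro}. First I would observe that, since $G$ is a word-representable split graph, Theorem \ref{3-word-rep} gives $\mathcal{R}(G) \le 3$. As the representation number is a positive integer, this forces $\mathcal{R}(G) \in \{1,2,3\}$, and hence $\mathcal{R}(G) = 3$ if and only if $\mathcal{R}(G) > 2$. Now I would invoke the characterization recalled in the Introduction that a word-representable graph has representation number at most two precisely when it is a circle graph \cite{Hallsorsson_2011}. Combining these two facts, the target equivalence collapses to the single statement: $\mathcal{R}(G) = 3$ if and only if $G$ is \emph{not} a circle graph.

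Next I would convert the circle-graph condition into the desired subgraph condition. Since $G$ is a split graph, Theorem \ref{split_char}\ref{forb_sub_split_cir} asserts that $G$ is a circle graph if and only if $G$ is $\mathcal{C}$-free, with $\mathcal{C}$ the family of Fig.~\ref{fig9}. Therefore $G$ fails to be a circle graph if and only if $G$ contains at least one member of $\mathcal{C}$ as an induced subgraph. At this point the two directions of the theorem are almost immediate. For the backward direction, if $G$ contains some $H \in \mathcal{C}_3$ as an induced subgraph, then since $\mathcal{C}_3 \subseteq \mathcal{C}$ (by definition), $G$ is not $\mathcal{C}$-free, hence not a circle graph, and so $\mathcal{R}(G) = 3$.

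For the forward direction I would argue as follows: if $\mathcal{R}(G) = 3$, then $G$ is not a circle graph, so by the above $G$ contains some $H \in \mathcal{C}$ as an induced subgraph. The key remaining step is to upgrade membership in $\mathcal{C}$ to membership in $\mathcal{C}_3$. Here I would use the hereditary nature of word-representability, namely that every induced subgraph of a word-representable graph is again word-representable (a standard property; cf.~\cite{words&graphs}). Since $G$ is word-representable, so is its induced subgraph $H$; and since $H \in \mathcal{C}$ is word-representable, Lemma \ref{word-rep-famC} forces $H \in \mathcal{C}_3$. This yields the required induced subgraph from $\mathcal{C}_3$ and completes both directions.

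The argument is essentially a bookkeeping assembly of earlier results, so there is no genuinely hard computation; the one point that must be stated carefully is the forward direction's use of the hereditary property together with Lemma \ref{word-rep-famC}, which is precisely what lets us replace the full family $\mathcal{C}$ by its word-representable subfamily $\mathcal{C}_3$ without losing any graphs in the restricted setting of word-representable split graphs. I expect this translation step to be the only place where something beyond direct citation is invoked, and it is the crux that makes the characterization clean.
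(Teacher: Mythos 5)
Your proposal is correct and follows essentially the same route as the paper's own proof: bound $\mathcal{R}(G)\le 3$ via Theorem~\ref{3-word-rep}, identify $\mathcal{R}(G)=3$ with $G$ not being a circle graph using the result of \cite{Hallsorsson_2011}, apply Theorem~\ref{split_char}\ref{forb_sub_split_cir} to get an induced subgraph from $\mathcal{C}$, and upgrade it to $\mathcal{C}_3$ via Lemma~\ref{word-rep-famC}. The only difference is cosmetic: you spell out the hereditary property of word-representability, which the paper leaves implicit.
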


\begin{proof}
	From Theorem \ref{3-word-rep}, it is evident that $\mathcal{R}(G) \le 3$. Suppose $\mathcal{R}(G) = 3$ so that $G$ is not a circle graph. Then, from Theorem \ref{split_char}\ref{forb_sub_split_cir}, $G$ contains at least one graph from the family $\mathcal{C}$ as an induced subgraph. Further, as $G$ is a word-representable split graph, in view of Lemma \ref{word-rep-famC}, $G$ must have an induced subgraph from $\mathcal{C}_3$.
	
	For converse, note that every graph in $\mathcal{C}_3$ is not a circle graph (cf. Theorem \ref{split_char}\ref{forb_sub_split_cir}). Hence, $\mathcal{R}(G) = 3$. \qed
\end{proof}

\begin{theorem}\label{Comp_F_0}
	The graphs $F_0$ and $F_1(5)$ are the only comparability graphs in $\mathcal{C}_3$.
\end{theorem}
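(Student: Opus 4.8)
The plan is to work entirely through the labelling characterisation of split comparability graphs (Theorem \ref{coro_3}), rather than searching for induced copies of $B_1,B_2,B_3$. Every member of $\mathcal{C}_3$ is a split graph, so such a graph $G=(I\cup C,E)$ is a comparability graph exactly when its clique $C$ admits a labelling $1,\dots,N$ (writing $N=|C|$) for which each $a\in I$ has $N(a)$ of one of the three boundary forms in Theorem \ref{coro_3}\ref{pt_1}, while conditions (ii)--(v) of Theorem \ref{coro_3} also hold. I would first isolate a counting observation that settles the negative cases uniformly: if $a\in I$ has $\deg(a)=2$, then its two-element neighbourhood, read in the labels, must be one of the admissible two-element sets. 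Inspecting the forms in Theorem \ref{coro_3}\ref{pt_1} shows these are exactly $\{1,2\}$ (from $[1,r]$ with $r=2$), $\{N-1,N\}$ (from $[l,N]$ with $l=N-1$), and $\{1,N\}$ (from $[1,m]\cup[n,N]$ with $m=1$, $n=N$); for $N\ge 4$ these three sets are distinct. Since a labelling is a bijection, it sends distinct neighbourhoods to distinct admissible two-sets, so a split graph whose independent set contains more than three degree-two vertices with pairwise distinct neighbourhoods cannot be a comparability graph.

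This observation disposes of almost all of $\mathcal{C}_3$. In an even $k$-sun the $k$ vertices $a_1,\dots,a_k$ are all of degree two, with the cyclically distinct neighbourhoods $\{c_i,c_{i+1}\}$, and $N=k\ge 4>3$. In $F_2(k)$ the $k-1$ vertices $a_1,\dots,a_{k-1}$ give $k-1\ge 4>3$ distinct degree-two neighbourhoods. In $F_1(k)$ the $k-2$ vertices $a_1,\dots,a_{k-2}$ give $k-2$ such neighbourhoods, which exceeds $3$ precisely when $k\ge 7$. Hence every even $k$-sun, every $F_2(k)$, and every $F_1(k)$ with $k\ge 7$ fails to be a comparability graph, leaving only $F_0$ and $F_1(5)$ as candidates.

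For the two positive cases I would exhibit explicit labellings meeting Theorem \ref{coro_3}. For $F_1(5)$ (here $N=4$) the increasing labelling $c_i\mapsto i$ fails, since it makes $N(a_2)=[2,3]$ a forbidden interior interval; instead $c_1\mapsto 2,\ c_2\mapsto 1,\ c_3\mapsto 4,\ c_4\mapsto 3$ turns $N(a_1),N(a_2),N(a_3)$ into $[1,2],\ \{1,4\},\ [3,4]$ and $N(b_1),N(b_2)$ into $[1,2]\cup[4,4],\ [1,1]\cup[3,4]$, all of boundary form, and conditions (ii)--(v) are then checked directly. For $F_0$, where the universal vertex $0$ joins the clique so that $N=5$, the labelling $0\mapsto 1,\ c_1\mapsto 4,\ c_2\mapsto 3,\ c_3\mapsto 2,\ c_4\mapsto 5$ makes the three degree-three neighbourhoods equal to $[1,3],\ [1,2]\cup[5,5],\ [1,1]\cup[4,5]$, again all admissible, with (ii)--(v) routine. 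Alternatively, $F_0=K_1\vee B_4$ with $0$ universal: no induced $B_1,B_2,B_3$ can contain a vertex adjacent to all others (each of $B_1,B_2,B_3$ has maximum degree strictly below its order minus one), so any such induced subgraph would lie in $F_0-0=B_4$, which is a comparability graph by Lemma \ref{WRF0}; Theorem \ref{Split_comp_graph} then gives that $F_0$ is a comparability graph. Combining the two directions yields that $F_0$ and $F_1(5)$ are exactly the comparability graphs in $\mathcal{C}_3$.

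The main obstacle is the borderline family member $F_1(5)$: there the counting bound is tight ($k-2=3$), so non-comparability cannot be inferred and a valid labelling must actually be constructed. The delicate point is that Theorem \ref{coro_3} forbids interior intervals (unlike Theorem \ref{Word_split_graph}), so the natural increasing labelling does not witness comparability; the construction must instead permute the clique labels to push every two-element neighbourhood to a boundary position while keeping $b_1,b_2$ admissible and simultaneously respecting conditions (ii)--(v).
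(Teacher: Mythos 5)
Your proposal is correct, but it reaches the conclusion by a genuinely different route from the paper, most notably in the negative half. Where the paper exhibits, in each of even-$k$-sun ($k \ge 4$), $F_1(k)$ ($k \ge 7$) and $F_2(k)$ ($k \ge 5$), an explicit induced copy of $B_3$ and invokes the forbidden-subgraph characterization of split comparability graphs (Theorem \ref{Split_comp_graph}), you work entirely inside the labelling characterization (Theorem \ref{coro_3}) and extract a counting lemma: condition (i) admits only three two-element neighbourhoods in labels, namely $\{1,2\}$, $\{N-1,N\}$ and $\{1,N\}$, so a split graph whose independent set carries more than three degree-two vertices with pairwise distinct neighbourhoods admits no valid labelling. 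I checked this lemma and its application to all three families; it is sound (it uses only condition (i), and Theorem \ref{coro_3} is applied to the given partition with $C$ inclusion-wise maximal, which holds throughout $\mathcal{C}_3$, so the ``only if'' direction does rule out comparability). Your argument is uniform across the families and makes transparent exactly why $F_1(5)$, with $k-2=3$ degree-two vertices, is the unique borderline case --- information the paper's case-by-case $B_3$-hunting does not surface; the paper's route, in exchange, stays closer to the classical Golumbic characterization and is shorter once the $B_3$ copies are spotted. On the positive side, for $F_1(5)$ you and the paper do essentially the same thing with mirror-image labellings ($2,1,4,3$ versus the paper's $3,4,1,2$), both of which satisfy (i)--(v). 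For $F_0$ you differ again: the paper reuses Theorem \ref{3-word-rep} to get a $3$-uniform word for $F_0$, deduces via the join lemma that $B_4$ is permutationally $3$-representable, and interleaves the universal vertex as $0q_10q_20q_3$; you instead verify a direct labelling of the $5$-clique ($0\mapsto 1$, $c_1\mapsto 4$, $c_2\mapsto 3$, $c_3\mapsto 2$, $c_4\mapsto 5$), which I confirmed turns the three independent neighbourhoods into $[1,3]$, $[1,2]\cup[5,5]$ and $[1,1]\cup[4,5]$ with (ii)--(v) holding, and your fallback argument (a universal vertex cannot lie in an induced $B_1$, $B_2$ or $B_3$ since none has a vertex adjacent to all others, so any such copy sits in $B_4$) is also valid --- though there you should cite the \emph{proof} of Lemma \ref{WRF0}, or directly Theorem \ref{Split_comp_graph} applied to $B_4$, since the statement of Lemma \ref{WRF0} only asserts word-representability of $F_0$, not comparability of $B_4$.
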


\begin{proof}
	Consider the vertex sets $\{c_1, c_2, c_3, a_1, a_2, a_3, a_k\}$ in even-$k$-sun (for $k \ge 4$), $\{c_2, c_3, c_4, a_1, a_2, a_3, a_4\}$ in $F_1(k)$ (for $k \ge 7$) and  $\{c_2, c_3, c_4, a_1, a_2, a_3, a_4\}$ in $F_2(k)$ (for $k \ge 5$). Note that  the subgraphs induced by these vertex sets in the respective graphs are isomorphic to $B_3$. Since, $B_3$ is not a comparability graph (cf. Theorem \ref{Split_comp_graph}), none of even-$k$-sun (for $k \ge 4$), $F_1(k)$ (for $k \ge 7$) and $F_2(k)$ (for $k \ge 5$) are comparability graphs. 
	
	Recall that the graph $F_0$ is obtained from the split comparability graph $B_4$ by adding the all-adjacent vertex $0$ (see Fig. \ref{fig9}). By Theorem \ref{3-word-rep}, note that $F_0$ is $3$-word-representable. Hence, in view of \cite[Lemma 3]{Hallsorsson_2011}, the graph $B_4$ is permutationally $3$-representable. Suppose $q_1q_2q_3$ represents $B_4$, where each $q_i$ is a permutation on the vertices of $B_4$. Then, it is easy to observe that the word $0q_10q_20q_3$ permutationally represents the graph $F_0$ so that $F_0$ is a comparability graph.
	
	Note that $F_1(5)$ is the split graph $(I \cup C,  E)$ with $I = \{a_1, a_2, a_3, b_1, b_2\}$ and $C = \{c_1, c_2, c_3, c_4\}$ such that $N(b_1) = \{c_1, c_2, c_3\}$, $N(b_2) = \{c_2, c_3, c_4\}$ and, for $1 \le i \le 3$, $N(a_i) = \{c_i, c_{i+1}\}$. We assign the labels $3, 4, 1$ and $2$ to the vertices $c_1, c_2, c_3$ and $c_4$, respectively. Under this labelling of the vertices of $C$, we have $N(a_1) = [3, 4]$, $N(a_2) = [1, 1] \cup [4, 4], N(a_3) = [1, 2], N(b_1) = [1, 1] \cup [3, 4]$, and $N(b_2) = [1, 2] \cup [4, 4]$. Observe that this labelling satisfies all the conditions given in Theorem \ref{coro_3} so that $F_1(5)$ is a comparability graph.	
	\qed
\end{proof}

The following theorem on the characterization of representation number of split comparability graphs can be concluded directly from theorems \ref{3-word-rep}, \ref{Rep_no_split} and \ref{Comp_F_0}.

\begin{theorem}\label{char_split_com}
	Let $G$ be a split comparability graph. Then,  $\mathcal{R}(G) = 3$ if and only if $G$ contains $F_0$ or $F_1(5)$ as an induced subgraph.  
\end{theorem}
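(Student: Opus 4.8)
The plan is to obtain the statement by directly chaining the three cited results, using two standard facts about comparability graphs. First I would observe that every split comparability graph $G$ is in particular a word-representable split graph: a comparability graph is permutationally representable and hence word-representable, so Theorem \ref{3-word-rep} applies and yields $\mathcal{R}(G) \le 3$. This reduces the theorem to deciding when the equality $\mathcal{R}(G) = 3$ holds, and Theorem \ref{Rep_no_split} already characterizes this for arbitrary word-representable split graphs in terms of containing an induced subgraph from $\mathcal{C}_3$.

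For the forward direction, I would assume $\mathcal{R}(G) = 3$. By Theorem \ref{Rep_no_split}, $G$ contains some $H \in \mathcal{C}_3$ as an induced subgraph. The key step is to invoke the fact that the class of comparability graphs is closed under taking induced subgraphs: restricting a transitive orientation of $G$ to the vertex set of $H$ yields a transitive orientation of $H$, so $H$ is itself a comparability graph. Then Theorem \ref{Comp_F_0} forces $H$ to be one of $F_0$ or $F_1(5)$, since these are the only comparability graphs in $\mathcal{C}_3$. Consequently $G$ contains $F_0$ or $F_1(5)$ as an induced subgraph.

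For the converse, if $G$ contains $F_0$ or $F_1(5)$ as an induced subgraph, then since both of these graphs lie in $\mathcal{C}_3$ (by the list preceding Lemma \ref{word-rep-famC}), $G$ has an induced subgraph belonging to $\mathcal{C}_3$; Theorem \ref{Rep_no_split} then gives $\mathcal{R}(G) = 3$.

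The only content beyond bookkeeping is the closure of comparability graphs under induced subgraphs, which is precisely what collapses the full family $\mathcal{C}_3$ down to the two graphs $F_0$ and $F_1(5)$ in the forward direction. I expect this to be the single substantive observation, and it is routine; everything else is a straightforward application of theorems \ref{3-word-rep}, \ref{Rep_no_split}, and \ref{Comp_F_0}.
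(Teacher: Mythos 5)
Your proposal is correct and is exactly the argument the paper intends: the paper states that Theorem \ref{char_split_com} follows directly from theorems \ref{3-word-rep}, \ref{Rep_no_split} and \ref{Comp_F_0}, and your write-up just makes explicit the one implicit ingredient (that comparability graphs are hereditary, so the induced subgraph from $\mathcal{C}_3$ is itself a comparability graph and Theorem \ref{Comp_F_0} collapses it to $F_0$ or $F_1(5)$). No gaps; this matches the paper's route.
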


\section{Conclusion}

The minimal forbidden induced subgraph characterization for the class of word-representable split graphs restricted to certain subclasses, viz., circle graphs, and comparability graphs, are available in the literature. However,  there is no such characterization available for the whole class of word-representable split graphs. In this connection, a few minimal forbidden induced subgraphs are found in this work as well as in the literature. The next natural step is to characterize word-representable split graphs in terms of minimal forbidden induced subgraphs.

\end{document}